\theoremstyle{thmbreak}
\newtheorem{thm}{Theorem}
\numberwithin{equation}{section}
\long\def\@makefntext#1{
\protect\noindent \hbox to 3.2pt {\hskip-.9pt
$^{{\eightrm\@thefnmark}}$\hfil}#1\hfill}       %CAN BE USED
\def\@makefnmark{\hbox to 0pt{$^{\@thefnmark}$\hss}}    %ORIGINAL
\font\eightrm=cmr8
\newcommand{\smalllineskip}{\baselineskip=10pt}
\def\pmb#1{\setbox0=\hbox{#1}
    \kern-.025em\copy0\kern-\wd0
    \kern.05em\copy0\kern-\wd0
    \kern-.025em\raise.0433em\box0}
\def\ps@myheadings{\let\@mkboth\@gobbletwo
  \def\@oddhead{{\slshape\rightmark}\hfil{\footnotesize\thepage}}%{\hfill\hbox{}\rightmark}
  \def\@oddfoot{}
  \def\@evenhead{{\footnotesize\thepage}\hfil\slshape\leftmark}%{\leftmark\hbox{}\hfill}
  \def\@evenfoot{}
  \def\sectionmark##1{}\def\subsectionmark##1{}
}
\newcommand{\copyrightheading}[1]
    {\vspace*{-2.5cm}\smalllineskip{\flushleft
    {\footnotesize Dipartimento di Matematica, Universit\`a di Trento}\\
    {\footnotesize Preprint UTM - #1}\\
    {\tiny \textbf{\jobname.tex}~--~\number\day/\number\month/\number\year~--~\timenow}%
    }}
\newcommand{\timenow}{%
  \@tempcnta=\time \divide\@tempcnta by 60 \number\@tempcnta:\multiply
  \@tempcnta by 60 \@tempcntb=\time \advance\@tempcntb by -\@tempcnta
  \ifnum\@tempcntb <10 0\number\@tempcntb\else\number\@tempcntb\fi}
\newcommand{\fcaption}[1]{
        \refstepcounter{figure}
        \setbox\@tempboxa = \hbox{\footnotesize Fig.~\thefigure. #1}
        \ifdim \wd\@tempboxa > 5in
           {\begin{center}
        \parbox{5in}{\footnotesize\smalllineskip Fig.~\thefigure. #1}
            \end{center}}
        \else
             {\begin{center}
             {\footnotesize Fig.~\thefigure. #1}
              \end{center}}
        \fi}
\newcommand{\tcaption}[1]{
        \refstepcounter{table}
        \setbox\@tempboxa = \hbox{\footnotesize Table~\thetable. #1}
        \ifdim \wd\@tempboxa > 5in
           {\begin{center}
        \parbox{5in}{\footnotesize\smalllineskip Table~\thetable. #1}
            \end{center}}
        \else
             {\begin{center}
             {\footnotesize Table~\thetable. #1}
              \end{center}}
        \fi}
\newtheorem{theorem}{Theorem}[section] %WHICH HAS SECTION NUMBER
\newtheorem{proposition}[theorem]{Proposition}
\newtheorem{definition}[theorem]{Definition}
\newtheorem{lemma}[theorem]{Lemma}
\newtheorem{hypothesis}[theorem]{Hypothesis}
\newtheorem{remark}[theorem]{Remark}
\newtheorem{example}[theorem]{Example}
\numberwithin{equation}{section}
\newcommand\N{\mathbb N}
\renewcommand\qed{\par\hfill\hbox{${\vcenter{\vbox{         %HOLLOW SQUARE
   \hrule height 0.4pt\hbox{\vrule width 0.4pt height 6pt
   \kern5pt\vrule width 0.4pt}\hrule height 0.4pt}}}$}}
\newcommand\R{\mathbb R}
\newcommand{\ep}{\varepsilon}
\newcommand{\EE}{\mathbb{E}}
\newcommand{\bN}{\mathbb{N}}
\newcommand{\PP}{\mathbb{P}}
\newcommand{\dd}{{\rm d}}
\begin{document}

%\begin{frontmatter}

  \title{Small noise asymptotic expansions for stochastic PDE's driven by dissipative nonlinearity and L\'evy noise}
  \def\lhead{S.\ ALBEVERIO, B. Smii, E.\ MASTROGIACOMO} %% for author's initials and surname
  \def\rhead{Small noise asymptotics for dissipative stochastic PDE's with Levy noise} %% for abbreviated title.

%\it  \\ Dept. Appl. Mathematics, University of Bonn, HCM; SFB611, BiBoS, IZKS \\
%%
% \it \\ NeSt Project, University of Trento, Dept.Math. \\

\author{Sergio Albeverio${}^\sharp{}$\thanks{Dept. Appl. Mathematics, University of Bonn, HCM; SFB611, BiBoS, IZKS. {\tt albeverio@uni-bonn.de}} %
  % \corauthref{cor1},
  \\
  Elisa Mastrogiacomo\thanks{Universit\'a degli studi di Milano Bicocca,
    Italia. {\tt elisa.mastrogiacomo@unimib.it}}
   \\
 Boubaker Smii${}^\sharp{}$\thanks{${}^\sharp{}$King Fahd University of
Petroleum and Minerals, Dept. Math. and Stat., Dhahran 31261, Saudi
Arabia. \hspace*{5mm}{\tt boubaker@kfupm.edu.sa}}}
\date{}
\maketitle

  \begin{abstract}
    We study a reaction-diffusion evolution equation perturbed by a space-time L\'evy noise. The
associated Kolmogorov operator is the sum of the infinitesimal generator of a $C_0$-semigroup
of strictly negative type acting in a Hilbert space and a nonlinear term which has at most
polynomial growth, is non necessarily Lipschitz and is such that the whole system is dissipative.

The corresponding It\^o stochastic equation describes a process on a
Hilbert space with dissi- pative nonlinear, non globally Lipschitz
drift and a L\'evy noise. Under smoothness assumptions on the
non-linearity, asymptotics to all orders in a small parameter in
front of the noise are given, with detailed estimates on the
remainders.

 Applications
to nonlinear SPDEs with a linear term in the drift given by a Laplacian in a bounded domain
are included. As a particular case we provide the small noise asymptotic expansions for the
SPDE equations of FitzHugh Nagumo type in neurobiology with external impulsive noise.

  \end{abstract}

\begin{center}\begin{minipage}{.8\textwidth}{%
\small {{\it Key words:} SPDE's equations, dissipative systems, L\'evy processes, stochastic convolution with L\'evy processes, asymptotic expansions, polynomially bounded non linearity, stochastic FitzHugh-Nagumo system}  %\begin{keyword}
 {\par\leavevmode\hbox {\it MSC(2010):}} 35C2060H15, 60651
    {\par\leavevmode\hbox {\it 1991 MSC:}} Primary 35K57; Secondary 92B20, 35R60, 35C20 }
\end{minipage}\end{center}
% \end{keyword}
%\end{frontmatter}
\date{\null}
% main text

%\setlength{\textheight}{7.7truein}    %FOR 2ND PAGE ONWARDS
%
%%%%%%%%%%%%%%%%%%%%%%%%%%%%%%%%%%%%%%%%%%%
%\author[S. Albeverio ] {  \it Sergio Albeverio \\ Dept. Appl. Mathematics, University of Bonn, HCM; SFB611, BiBoS, IZKS \\
%albeverio@uni-bonn.de}
%
%
%
%\author[L. Di Persio]{\medskip \\ }
%
%
%
%\author[E. Mastrogiacomo]{\medskip \\ \it Elisa Mastrogiacomo\\ Politecnico di Milano \\ elisa.mastrogiacomo@polimi.it}
%
%
%\title[]{Invariant measures for stochastic PDEs with dissipative polynomially bounded nonlinearity and applications on neural networks.}
%
%
%\date{}
%
%\begin{abstract}
%
%
%\end{abstract}
%
% \subjclass[2010]{Primary 35K57, 35R60, 35C20; Secondary 92B20}
% \keywords{Reaction diffusion equations, dissipative systems, asymptotic expansions, polynomially bounded non linearity, stochastic FitzHugh-Nagumo system}
%\maketitle

\section{Introduction}

In many areas of investigations, in natural as well as technical and
socio-economical sciences, a description of phenomena in terms of
(partial) differential equations (PDEs) is quite natural and has
received a lot of attention, also in recent years. However the
necessity of taking care of stochastic (or random) influences on
systems primarly described by (P)DEs in particular through
stochastic (P)DEs, S(P)DEs for short, has also came  to the
forefront of research, see, e.g. \cite{Wal, Kall},
\cite{DPZVerde},\cite{DPZRosso}, \cite{PeZa,Mumf, GaMa, Holden et
al}
 In the present paper we concentrate on PDE's perturbed by a space-time noise of the
additive type. Such SPDE's have been studied extensively
particularly in the case of Gaussian noises and they have found
applications in several areas, from physics to biology and financial
mathematics, see e.g. \cite{Wal}, \cite{DPZVerde}, \cite{DPZRosso},
\cite{GaMa, Holden et al, AlMaLy, Carm, AlRo,PRRO}. The extension to
the treatment of additive L\'evy type noises (which are more general
in the sense that random variables with L\'evy distributions extend
Gaussian random variables) is relatively more recent, see e.g.
\cite{AlWuZh, PeZa}. A natural question which arises in such
extensions from a deterministic description of phenomena to a
stochastic one, is in which sense one can recover the deterministic
description by "switching off" the noise and possibly obtain "small
noise expansions" around the limit. In the case of SDE's (as
opposite to SPDE's) this is a rather well studied problem,
especially in the case of Gaussian noises and it has also relations
with the study of the classical limit from quantum mechanics(see
e.g. \cite{Wa, TuEsp,IkWa, AMa, ASK, AL, Marc2, Ma, MaTa, Si, IK07,
InKa}). The infinite dimensional case and the case of SPDEs is less
studied, even in the case of Gaussian noises, see however \cite{CeF,
AlMaLy, ARoSk, RT, Ma,Marc2}. Concerning applications, the case of
stochastic perturbations of the FitzHugh-Nagumo equations of
neurobiology and its relations with the classical, deterministic
FitzHugh Nagumo equations is particularly interesting, due to the
fact that those equations, and their extensions to the case where
the underlying euclidean domain in space is replaced by a network,
are extensively used in neurobiology, see e.g. \cite{CaMu, AlDiP,
Tu, Wal}. In two recent papers \cite{AlDiPMa}, \cite{ADPM} a
systematic study of SPDE's with additive Gaussian noise which
includes in particular the above stochastic FitzHugh-Nagumo
equations, has been given, together with a detailed study of the
corresponding diffusion expansion around the deterministic limit.
One basic difficulty which had to be overcome (besides the infinite
dimensionality of the stochastic process involved) consisted in the
non global Lipschitz character of the nonlinear terms. A global
Lipschitz condition would in fact exclude the interesting case of
the FitzHugh-Nagumo equations; similarly other interesting equations
like those arising in stochastic quantization \cite{ALKR},
hydrodynamics \cite{AFER} or solid state physics ( e.g, Allen-Cahn
equations), would be excluded. Despite the interest of modeling the
noise in such systems through a L\'evy-type one instead of a
Gaussian one, which has motivations in all the areas which have been
mentioned, apparently a corresponding study of asymptotic expansions
around the underlying deterministic systems has yet to be performed.
We shall here adopt the method used in \cite{AlDiPMa} to cope with
this case. The adaptation involves, in particular, using methods
developed by \cite{PeZa} to handle stochastic convolutions in the
case of L\'evy noise. Let us note that our results seem to be new
even in the finite dimensional case, where small stochastic
perturbation expansions have also not been provided in details for
equations of the type we consider. Before we go over to describe the
contents of the present paper, let us mention that our study of
SPDE's with L\'evy noise can also be related to the study of certain
pseudo-differential equations with such noises which occur in
quantum field theory and statistical mechanics (see e.g \cite{AGoWu,
AlGYo}. Also relations to certain problems in the study of
statistics of processes described by L\'evy noises should be
mentioned \cite{GoSm, GST}.

   \section{Outline of the paper}

 Let us consider the following deterministic nonlinear evolution problem:
    \begin{equation}\label{eq:det}
    \begin{cases}
         d\phi(t)= [A\phi(t)+F(\phi(t))]dt  , \quad t \in [0,+\infty) \\
         \phi(0) = u^0, \quad u^0 \in H\:,
    \end{cases}
    \end{equation}
    where
    $A$ is a linear operator on a separable Hilbert space $H$ which generates a $C_0$-semigroup of strict negative type.
    The term $F$ is a \textit{smooth} nonlinear, quasi-$m$-dissipative mapping from the domain $D(F)\subset  H$ (dense in $H$) with values in
    $H$ (this means that there exists $ \omega \in \mathbb{R}$ such that $(F-\omega I )$ is $m$-dissipative in the sense of \cite[p. 73]{DPZVerde}),
    with (at most) polynomial growth at infinity and satisfying some further assumptions which will be specified in Hypothesis \ref{hp:A+F} below.)
   % while $D(F)$ is the domain of $F$, assumed to be dense in $H$.
    Existence and uniqueness of solutions for equation \eqref{eq:det} is discussed in Proposition \ref{prop:MildDeterministica} below.

Our aim is to study a stochastic (white noise) perturbation of \eqref{eq:det} and to write its (unique) solution as an expansion in powers of a parameter $\ep>0$, which controls the strength of the noise, as  $\varepsilon$ goes to zero. More precisely, we are concerned with the following stochastic  Cauchy problem on the  Hilbert space $H$:
    \begin{equation}\label{eq:eps}
    \begin{cases}
         du(t)= [Au(t)+F(u(t))]dt + \varepsilon \sqrt{Q}dL(t) , \quad t \in [0,+\infty) \\
         u(0) = u^0, \quad u^0 \in K \:,
    \end{cases}
    \end{equation}
    where $A$ and $F$ are as described above, $L$ is a mean square integrable L\'evy process taking values in a Hilbert space $U$, $Q$ is a positive trace
    class linear operator from $H$ to $H$ and $\ep>0$ is the parameter which determines the magnitude of the stochastic perturbation. The initial datum
    $u^0$ takes values in a continuously embedded Banach space $K$ of $H$.
  A unique solution of the problem \eqref{eq:eps} can be shown to exist  exploiting  as in \cite{BoMa} results on stochastic differential equations
  (contained, e.g., in  \cite{DPZRosso, DPZVerde}).
      Our purpose is to show that the solution of the equation \eqref{eq:eps}, which will be denoted by $u=u(t),  t \in [0,+\infty)$, can be written as
        $$
         u(t) = \phi(t) + \ep u_1(t) + \dots + \ep^n u_n(t) + R_n(t, \ep) \:,
        $$
        where  $n$ depends on the differentiability order of $F$. The function $\phi(t)$ solves the associated deterministic problem \eqref{eq:det},
        $u_1(t)$ is the stochastic process which solves the following linear stochastic (non-autonomous) equation
        \begin{equation} \label{eq:u1}
         \begin{cases}
         du_1(t)= [Au_1(t)+\nabla F(\phi(t)) [u_1(t)]]dt +  \sqrt{Q}dL(t) , \quad t \in [0,+\infty) \\
         u_1(0) = 0\:,
    \end{cases}
        \end{equation}
        while for each $k=2,\ldots,n \; ,$ $u_k(t)$ solves the following non-homogeneous linear differential equation with stochastic coefficients %(linear in $u_k$)
        \begin{equation}\label{eq:uk}
         \begin{cases}
         du_k(t) = \left[A u_k(t) + \nabla F (\phi (t) )[u_k(t)]\right]  dt + \Phi_k(t) dt, \\
         u_k(0) = 0 \:.
         \end{cases}
        \end{equation}
     $\Phi_k(t)$ is a stochastic process which depends on $u_1(t),\ldots,u_{k-1}(t)$ and the Fr\'echet derivatives of $F$ up to order $k$,
     see Section \ref{sec:2} for details.

Let us shortly describe the content of the different sections of the
present paper. In Section 3 we set the basic assumptions needed to
perform the construction of solutions and their asymptotic
expansion. In section 4 we describe the mild solutions to SDE's
driven by L\'evy processes on Hilbert spaces, basically following
the setting of \cite{PeZa}. Since our expansion will be around
solutions of the corresponding deterministic equations, we start by
presenting results on the latter equations (Subsection 4.1). In the
Subsection 4.2 we present the setting for the stochastic
perturbation, first describing the noise. In Section 5 we describe
the basic assumptions on the nonlinear term and provide its Taylor
expansion. Section 6 contains the main results, in particular the
construction of the expansion, the proof of its asymptotic character
and of detailed estimates on remainders, to any order. We close with
an application to the case of a FitzHugh Nagumo equation on a
network.

%as in Da Prato \& Zabczyk \cite{DPZverde}, we start by presenting our notation and by making some assumptions on the operators $A$
\section{Assumptions and Basic Estimates} \label{sec:1}
Before recalling some known results on problems of the types
\eqref{eq:det}, \eqref{eq:eps}, \eqref{eq:u1} and \eqref{eq:uk}, we
begin by presenting our notation and assumptions. We are concerned
with a real separable Hilbert space, with the inner product $\langle
\cdot,\cdot\rangle$. Moreover, in what follows, $(B, |\cdot|)$ is a
reflexive Banach space continuously embedded into $H$ as a dense
Borel subset and $(K, |\cdot|)$ is a reflexive Banach space
continuously embedded in $B$.
%Thus we have
%\begin{align*}
%     K \subset B \subset H.
%\end{align*}
On $H$ there are given a linear operator $A: D(A)\subset H \to H$, a
nonlinear operator $F: D(F) \subset H \to H$ with dense domain in
$H$ and a bounded linear operator $Q$ on from $H$ to $H$. Moreover,
we are given a complete probability space $(\Omega,F, (F_t)_{t\geq
0}, \PP)$ which satisfies the usual conditions, i.e., the
probability space is complete, $F$ contains all $\PP$-null subsets
of sets in $F$ and the filtration $(F_t)_{t\geq 0}$ is right
continuous. Further, for any trace-class linear operator $Q$, we
will denote by ${\rm Tr}\, Q$ its trace; if $f$ is any  mapping on
$H$ which is Fr\'echet differentiable up to order $n$, $n\in
\mathbb{N}$,  we will denote by
      $f^{(i)}, \, i=1,\dots,n$  its $i$-th Fr\'echet derivative and by
$D(f^{(i)} )$ the corresponding domain (for a short survey on
Fr\'echet differentiable mappings we refer to Section 4). For any
$j\in \N$ and any vector space $X$, $L(X^j;X)$ denotes the space of
$j$-linear bounded mappings from
   $X^j$ into $X$ while the space of linear bounded mappings from $X$ into $L(X^j;X)$
is denoted by $L^j(X)$.
We denote by $|\cdot|_X$ the norm on $X$, by $\|\cdot\|_{L^j(X)}$ the norm of any $j$-linear operator on $X$ and by $\|\cdot\|_{HS}$ the Hilbert-Schmidt
norm of any linear operator
       on $X$.
Finally, for any $p\geq 1$, we will denote by $\mathcal{C}_{\mathcal{F}}([0,T]; L^p(\Omega;X))$ the space of $X$-valued, adapted mean square continuous processes
  $Y$ on the time interval $[0,T]$ such that the following norm is finite
$$
    |\!|\!| Y|\!|\!|= (\sup_{t \in[0,T]}\EE \left |Y(t)|_X^p \right)^{1/p}<\infty.
$$
    %\begin{notation}
    %\end{notation}
    \begin{hypothesis}\label{hp:A+F}
        \begin{enumerate}
        \item[]
             \item  The operator $A: D(A) \subset H \to H$ generates an analytic semigroup $(e^{tA})_{t\geq 0}$,
             on $H$ of strict negative type such that
             \begin{equation*}
                 \left\|e^{tA}\right\|_{L(H)} \leq e^{-\omega t}, \quad t\geq 0
             \end{equation*}
             with $\omega$ a strictly positive, real constant.% , and $ \mid Tr A^{-1} \mid < +\infty$,   (where $Tr$ stands for trace).
             % We also assume that there exists  an orthonormal basis $\left\{ e_k \right\}_{k \in \mathbb{N}}$ such that $A$ diagonalizes on it, that is there exists a sequence $ \left\{ \lambda_k \right\}_{k \in \mathbb{N}}$ of strictly positive increasing numbers such that: $\langle A e_k, e_k\rangle = -\lambda_k$ ($\langle \cdot , \cdot \rangle$ being the scalar product in $H$). We require in addition that: $\sum_{k \geq 1} \frac{1}{\lambda_k} < +\infty$.

             %\item There exist $\alpha \in (0,1/2)$ such that for arbitrary $T>0$
            % \textcolor{red}{(eventualmente sostituire $T$ con $\infty$)}
            % \begin{equation*}
            %     \int_0^T t^{-2\alpha} \left\|e^{tA}\sqrt{Q}\right\|_{HS}^2 ds <\infty.
            % \end{equation*}

%        Let assume that there exists a Banach space $K$, densely and continuously embedded $($as a Borel subset$)$ into $H$, endowed with the norm
%$|\cdot|_K$.
            Moreover, if $A_B$ denotes the part of $A$ in the reflexive Banach space $B$, that is
         $$
                D(A_B):=\left\{ x\in D(A)\cap B; A x\in B \right\}, \quad A_B x=Ax,
         $$
      then $A_B$ generates an analytic semigroup $($of negative type$)$ $e^{tA_B}, t\geq 0$ on $B$.
             \item The mapping $F: D(F) \subset H \to H$ is continuous,
             nonlinear, Fr\'echet differentiable up to order $n$ for some positive integer
             $n$ and quasi-$m$-dissipative, i.e., there exist $\eta>0$ such that
     $$
          \left\langle F(u)-F(v)-\eta(u-v),u-v\right\rangle < 0, \qquad for \ all \ u,v\in D(F).\\
     $$
     \item If $F^{(j)}_B$, $j=1,\dots,n$ denotes the part of $F^{(j)}$ in $B$, that is
          $$
                D(F^{(j)}_B):=\left\{ x\in D(F^{(j)})\cap K; F^{(j)}_B( x)\in B \right\}, \quad F^{(j)}_B (x)=F^{(j)}(x),
         $$
     then there exists a reflexive Banach space $K$ densely and continuously embedded in $B$ which makes the following assumptions satisfied:
          \begin{enumerate}
              \item there exists a positive real number
             $\gamma$ and a positive natural number $n $  such that:
             \begin{align*}
                 %&\left\langle F(u)-F(v)-\eta(u-v),u-v\right\rangle < 0\\
                 &\left| F_B(u) \right|_{B} \leq \gamma \left(1+\left|u\right|_{K}^{2n+1} \right),\quad u \in K,
             \end{align*}
    \item for some $n $ and any $u\in D(F_K^{(i)}), i=1,\dots,n$, there exist positive real constants $\gamma_i, \ i=1,\dots,n$ such that
\begin{align*}
     \|F^{(i)}_B(u) \|_{L^{j}(B)} \leq \gamma_i(1+|u|_K^{2n+1-i}) \:,\quad \textrm{with $n$ as in $(${\rm iii}$)$, $u\in K$}
\end{align*}
      \end{enumerate}
        %     \begin{comment}
        %      Moreover we assume that $F$ has the following expansion for any
        %     $\phi,u_1,\dots,u_n \in H$: define $h_n=\ep u_1+\dots+\ep^n u_n$; then
        %     \begin{equation*}
        %        F(\phi+h) = F(\phi)+(\nabla F(\phi)[h])_*+ (\nabla^2F(\phi)[h,h])_{*} +
        %        \dots + (\nabla^{(n)}F(\phi)[h,\dots,h])_{*}+R_F(\phi,\dots,u_n,\ep)
        %     \end{equation*}
        %     where the subscript $*$ means keep the terms only up to order $\ep^n$, $R_F$ verifies
        %     \begin{equation*}
        %         R_F(\phi,\dots,u_1) \leq C \left(\left|\phi\right|,\dots,\left|u_n\right|\right)\ep^{n+1}
        %     \end{equation*}
        %     and $C$ is a function of $\left|\phi\right|,\dots,\left|u_n\right|$ with subpolynomial growth.
        %     \end{comment}

        \item The constants $\omega,\eta$ satisfy the inequality $\omega -\eta >0$; this implies that
         the term $A+F$ is $m$-dissipative in the sense of \cite{DPZRosso}, \cite[p. 73]{DPZVerde}.
         \item The term $L$ is a L\'evy process $($for example in the sense of  \cite{AW, PeZa}$)$ on some Hilbert space $U$; moreover we assume that
    \begin{align*}
     \int_U |y|^m \nu(\dd y) <\infty,
\end{align*}
   for all $m\in \N$, where $\nu$ is the jump intensity measure introduced in section
 (\ref{ABOE}).
         \item $Q$ is a positive linear bounded operator on $H$ of trace class, that is ${\rm Tr}\,Q<\infty$.
        \end{enumerate}
    \end{hypothesis}

  \begin{example} \label{ex:F} \rm

Let us give an example of a mapping $F$  satisfying the above
hypothesis (in view of the application to stochastic neuronal
models, which we will present in section 6).
   Let $H = L^2(\Lambda)$ with $\Lambda \subset \mathbb{R}^n$, bounded and open; set
  $B:=L^{2(2n+1)}(\Lambda)$, $K:=L^{2(2n+1)^2}(\Lambda)$
%(with $\bar{\Lambda}$ the closure of $\Lambda$ in $\R^n$)
and let $F$ be a multinomial of odd degree $2n+1$, $n \in \mathbb{N}$, i.e. a mapping of the form
$
    F(u)=g_{2n+1}(u)
$, where $g_{2n+1}(u)$, $u \in H$, is a polynomial of degree $2n+1$,  that is, $ g_{2n+1}(u)=a_0 +a_1 u+ \dots +a_{2n+1} u^{2n+1}$, with $a_i \in \mathbb{R}$, $i=0,\ldots,2n+1$. Then
it is easy to prove that
$D(F) = L^{2(2n+1)}(\Lambda) \subsetneq L^2(\Lambda),n>0$, $D(F)=L^{2}(\Lambda)=H,n=0$ and (by using the H\"older inequality) $D(F^{(i)})=L^{2(2n+1-i)}(\Lambda)$. Moreover, it turns out that, for any $u\in K$, $F^{(i)}(u)$ can be identified with the element $g_m^{(i)}(u)$ (both in $D(F)$ and $K$).
Consequently,
\begin{align*}
      |F(u)|_B &= \left(\int_{\Lambda}|g_{2n+1}(u(\xi))|^{2(2n+1)} \dd \,\xi\right)^{1/(2(2n+1))} \\
       %&=  \left(\int_{\Lambda} |g_{2n+1}(u(\xi))|^{2(2n+1)}\dd\, \xi\\
       &  \leq C_{2n+1} \left(1+ \int_{\Lambda}|u(\xi)|^{2(2n+1)^2} \dd \,\xi\right)^{1/(2(2n+1))}  \\
      &= C_{2n+1} (1+|u|_K^{2n+1})
\end{align*}
and, similarly,
\begin{align*}
      |\nabla^{(j)} F(u)|_{L^j(K;B)} &  \leq C_{2n+1-i} (1+|u|_K^{{2n+1-i}}) \\
      &= C_{2n-i} (1+|u|_K^{2n+1-j}), \qquad j=0,1,\dots,m.
\end{align*}
Hence $F$ satisfies Hypothesis \ref{hp:A+F} (ii), (iii). Further, in
the case $g_3(u)=-u(u-1)(u-\xi), 0<\xi<1$ the corresponding mapping
$F$ coincides with the non linear term of the first equation in the
FitzHugh-Nagumo system (see Example \ref{Remark:FHN} below).
   %%%%%%%%%%%%%%%%%%%%%%%%%%%%%%%%%%%%%%%%%%%%%%%%%%%%%%%%%%%%%%%%%%%%%%%%%%%%%%
   %%%%%%%%%%%%%%%%%% NUOVO REMARK STIME NABLA F  %%%%%%%%%%%%%%%%%%%%%%%%%%%%%%%
   %%%%%%%%%%%%%%%%%%%%%%%%%%%%%%%%%%%%%%%%%%%%%%%%%%%%%%%%%%%%%%%%%%%%%%%%%%%%%%

    \end{example}
    %More precisely, the following hypothesis
    %will be imposed everywhere in what follows.

    %The main result of this section is
    %\begin{theorem}
    %    Let $u(t)$ be the solution of \eqref{eq:eps}. Under the hypothesis
    %    ... $u$ can be represented in the following form:
    %    \begin{equation*}
    %        u(t)=\phi(t)+ \varepsilon u_1(t)+\varepsilon^2u_2(t)+ \cdots + \varepsilon^n u_n(t) + R_n(t,\varepsilon)
    %    \end{equation*}
    %    where
    %    \begin{equation*}
    %        \lim_{\ep \to 0} \dfrac{\EE\left[\sup_{t\in [0,+\infty)} \left|R_n(t,\varepsilon)\right|^2 \right]}{\ep^n} =0.
    %    \end{equation*}
    %\end{theorem}

\section{Mild solutions to SDE's driven by L\'{e}vy on Hilbert spaces}
   \noindent
In this section we basically use the setting of \cite{PeZa}.

% Kapitel 2.1: The deterministic case
\subsection{The deterministic case}
\label{sec:section1}

Let $A_0$ be a densely defined linear operator on a Banach space $B$, with domain $D(A_0)$. Let assume that the differential equation
\begin{equation}
    \left\{
        \begin{aligned}
            \frac{\mathrm{d}y}{\mathrm{d}t} &= A_0y \\
            y(0) &= y_0 \in D(A_0)
        \end{aligned}
    \right.
    \label{eq:equation1}
\end{equation}
has a unique solution $y(t)$, $t\geq0$, $y(t)\in B$. The equation being linear, we have
\begin{equation}
    y(t) = S(t)y_0\text{,} \quad t\geq0\text{,}
    \nonumber
\end{equation}
with $S(t)$ a linear operator from $D(A_0)$ into $B$. If for each
$t\geq0$, $S(t)$ has a \underline{continuous} extension to all of
$B$, and for each $z\in B$, $t\to S(t)z$ is continuous, then one
says that the Cauchy problem (\ref{eq:equation1}) is well posed.
$t\to S(t)z$, defined then for all $z\in B$, is called a generalized
solution to (\ref{eq:equation1}). One has: that
$\bigl(S(t)\bigr)_{t\geq0}$ is a $C_0$-semigroup:
\begin{enumerate}
    \item $S(0)=\mathbb{1}$, $S(t)S(s)=S(t+s)$, $t,s\geq0$,
    \item $\left| S(t)z-z \right|_{B} \to 0$ as $t\downarrow 0$, for every $z\in B$.
\end{enumerate}
Let $D(A)$ be the definition domain of the generator $A$ of $S(t)$. We have $D(A)\supset D(A_0)$ and $A$ is an extension of $A_0$.
 Moreover, see,  e.g. (\cite{PeZa}, Theorem 9.2):
\begin{enumerate}
    \item $\left| S(t)z \right|_{B} \le \mathrm{e}^{\omega t}M\left| z \right|_{B}$, for some $\omega,M>0 \ \forall z\in B$, $\forall t\geq0$,
    \item $A$ is closed and for any $z\in D(A)$, $t>0$ one has $S(t)z\in D(A)$ and $\frac{\mathrm{d}}{\mathrm{d}t}S(t)z=AS(t)z=S(t)Az$.
     In particular for $z=y_0\in D(A)$, $t\to S(t)y_0$ solves (\ref{eq:equation1}) with $A$ replacing $A_0$.
\end{enumerate}
Now, let $H$ be a Hilbert space such that $B\subset H$, with a dense, continuous embedding, $B$ being a Borel subset of $H$. Let $\psi(t)$, $t\geq0$ be $H$-valued and continuously differentiable. Then the \grqq variation of constants formula\glqq
\begin{equation}
    y(t) = S(t)y_0 + \int\limits_{0}^{t}S(t-s)\psi(s)\,\mathrm{d}s\text{,} \quad t\geq0
    \label{eqn:equation2}
\end{equation}
solves
\begin{equation}
    \left\{
        \begin{aligned}
            \frac{\mathrm{d}y}{\mathrm{d}t}(t) &= Ay(t) + \psi(t) \\
            y(0) &= y_0 \in H\text{.}
        \end{aligned}
    \right.
\end{equation}
In general, whenever the integral in (\ref{eqn:equation2}) has a meaning for a given $y_0$ in $H$, one says that (\ref{eqn:equation2}) is a
 \underline{mild solution} of
\begin{equation}
    \left\{
        \begin{aligned}
            \frac{\mathrm{d}y(t)}{\mathrm{d}t} &= Ay(t) + \psi(t) \\
            y(0) &= y_0\text{.}
        \end{aligned}
    \right.
\end{equation}

The formal definition of mild solution is given below.
  \begin{definition}\label{def:det}
       Let $y_0 \in H$; we say that the function $\phi: \, [0,\infty) \to H$ is a mild solution of equation \eqref{eq:det}
       if it is continuous $($in $t)$, with values in $H$ and it satisfies:
       \begin{equation} \label{mildsolution}
           \phi(t)=e^{tA}y_0+\int_0^t e^{(t-s)A}\psi(s) \, \dd s, \quad t\in[0,+\infty),
       \end{equation}
    with the integral existing in the sense of Bochner integrals on Hilbert spaces.
     \end{definition}

In the case of $\psi$ being substituted by a mapping $F: D(F)
\subset H \to H$ satisfying the assumptions given in Hypothesis
$\ref{hp:A+F}$ we have the following result.

    \begin{proposition}\label{prop:MildDeterministica}
        Under Hypothesis $\ref{hp:A+F}$ there exists a unique mild solution $\phi=\phi(t), t \in [0,\infty)$
        of the deterministic problem
\begin{equation}
    \left\{
        \begin{aligned}
            \frac{\mathrm{d}y}{\mathrm{d}t} &= A_0y + F(y) \\
            y(0) &= y_0 \in D(A_0)
        \end{aligned}
    \right.
    %\label{eq:equation1}
\end{equation} such that
        \begin{equation}\label{stimaLunardi}
            |\phi(t)|_H \leq e^{-2(\omega-\eta) t}|u^0|_H, \quad t\geq 0.
        \end{equation}
    \end{proposition}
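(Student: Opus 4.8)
The plan is to work throughout with the mild (integral) formulation: a continuous function $\phi\colon[0,\infty)\to H$ is a mild solution exactly when it satisfies \eqref{mildsolution}, that is
\[
\phi(t)=e^{tA}u^{0}+\int_{0}^{t}e^{(t-s)A}F(\phi(s))\,\mathrm{d}s,\qquad t\ge 0 .
\]
Two structural facts drive everything. First, the bound $\|e^{tA}\|_{L(H)}\le e^{-\omega t}$ from Hypothesis \ref{hp:A+F}(i) forces, by differentiating $|e^{tA}x|_H^{2}\le e^{-2\omega t}|x|_H^{2}$ at $t=0$, the dissipativity estimate $\langle Ax,x\rangle\le-\omega|x|_H^{2}$ on $D(A)$. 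Second, the quasi-$m$-dissipativity in Hypothesis \ref{hp:A+F}(ii) reads $\langle F(u)-F(v),u-v\rangle\le\eta|u-v|_H^{2}$. Adding these, the full drift obeys
\[
\langle (A+F)(u)-(A+F)(v),\,u-v\rangle\le-(\omega-\eta)\,|u-v|_H^{2},
\]
and by Hypothesis \ref{hp:A+F}(iv) the constant $\omega-\eta$ is strictly positive, so $A+F$ is \emph{strictly} dissipative. This one inequality supplies both uniqueness and the decay \eqref{stimaLunardi}, while existence rests on the range (maximality) condition recorded in Hypothesis \ref{hp:A+F}(iv).

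For existence and uniqueness I would argue in two stages. Since $F$ is Fréchet differentiable (Hypothesis \ref{hp:A+F}(ii)) it is locally Lipschitz on bounded sets, so a Banach fixed-point argument applied to the integral operator on $C([0,T];H)$—using the smoothing of the analytic semigroup to absorb the domain restriction $\phi(s)\in D(F)$ and to control the convolution—produces a unique local mild solution on a maximal interval $[0,T_{\max})$. The only obstruction to globality is blow-up of $|\phi(t)|_H$ as $t\uparrow T_{\max}$, and the dissipative estimate above furnishes an a priori bound uniform in $t$, whence $T_{\max}=\infty$. Equivalently, and more in the spirit of \cite{DPZVerde,DPZRosso}, one may invoke the nonlinear-semigroup theory for $m$-dissipative operators: the resolvents $(I-\lambda(A+F))^{-1}$ are well-defined contractions for small $\lambda>0$, the implicit Euler scheme converges, and its limit is the unique mild solution, depending contractively on $u^{0}$.

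The decay \eqref{stimaLunardi} follows from the energy method. For a strict solution, or for the Yosida regularizations $\phi_{\lambda}$ of the nonlinearity—for which the computation is licit—I would differentiate
\[
\tfrac12\frac{\mathrm{d}}{\mathrm{d}t}|\phi(t)|_H^{2}
=\langle A\phi(t)+F(\phi(t)),\phi(t)\rangle
\le-(\omega-\eta)\,|\phi(t)|_H^{2},
\]
using $\langle A\phi,\phi\rangle\le-\omega|\phi|_H^{2}$ together with the dissipativity of $F$ tested against the equilibrium $v=0$. Gronwall's lemma then yields the exponential estimate \eqref{stimaLunardi}, and passing to the limit $\lambda\downarrow0$ transfers it from $\phi_{\lambda}$ to the mild solution $\phi$.

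The main obstacle is precisely the lack of a global Lipschitz bound on $F$: with only polynomial growth one cannot close a global fixed-point argument or a naive Gronwall estimate directly on the mild formula, and dissipativity must do the work instead. The delicate point is that the energy identity is rigorous only at the level of the approximate (Yosida or Galerkin) solutions, so one must justify the passage to the limit in the nonlinear convolution $\int_{0}^{t}e^{(t-s)A}F(\phi_{\lambda}(s))\,\mathrm{d}s$; this is achieved by combining the uniform a priori bounds coming from the dissipative estimate with the demiclosedness (monotonicity) of $A+F$, which guarantees that the limit solves the original equation and not a regularized one.
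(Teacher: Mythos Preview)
Your proposal is correct and follows essentially the same route as the paper: existence and uniqueness are obtained from the $m$-dissipative theory in \cite{DPZRosso} (the paper simply cites \cite[Theorem~7.13]{DPZRosso}, which is precisely the nonlinear-semigroup construction you describe), and the decay estimate \eqref{stimaLunardi} comes from the same energy computation
\(\tfrac{\mathrm d}{\mathrm dt}|\phi(t)|_H^{2}\le -2(\omega-\eta)|\phi(t)|_H^{2}\)
followed by Gronwall. The only difference is that the paper leaves this differential inequality at the formal level, whereas you spell out the Yosida-approximation step needed to justify it for mild solutions.
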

    \begin{proof}

    The proof of existence and uniqueness can be found,e.g in \cite[Theorem 7.13, p. 203]{DPZRosso},
     while estimate \eqref{stimaLunardi} is a direct consequence of the application of Gronwall's lemma to the following inequality
    \begin{align*}
     \frac{d}{dt} | \phi(t) |_{H}^2 &= 2 \langle A \phi(t), \phi(t) \rangle dt + 2\langle F(\phi(t)),\phi(t) \rangle \\
   &\leq - 2(\omega-\eta)|\phi(t)|_H^2.
    \end{align*}
    \end{proof}

   \begin{remark}
         It can be shown that, under Hypothesis $\ref{hp:A+F}$, there exists a $K$-continuous version of the unique solution of equation \eqref{mildsolution} such that, for any $T>0$, $p\geq 1$
     \begin{align*}
         \sup_{t\in [0,T]} |\phi(t)|_K^p <\infty.
     \end{align*}
      $($see \cite[Section 5.5.2, Proposition 5.5.6]{DPZVerde}$)$.
        Hence, in the following, by $\phi$ we will always understand this $K$-valued version of the solution of \eqref{eq:det}.
    \end{remark}

% Kapitel 2.2: The stochastically perturbed case
\subsection{The stochastically perturbed case}\label{ABOE}
 Let $G$ be a linear operator from a Hilbert space $U$ into a
Hilbert space $H$. Let $S(t)$ be a $C_0$-semigroup on the Hilbert
space $H$.
 Assume the generator $\left(A,D(A)\right)$ of $S(t)$ in $H$ is almost $m$-dissipative (i.e.\ $[(\lambda\mathbb{1}-A)+\eta]H=H$ for any
 $\lambda>0$ and some $\eta\in\mathbb{R}$: [\cite{PeZa}, p. 180]; this is equivalent to quasi m-dissipative in the sense of \cite[p. 73]{DPZVerde}.) Assume $B \subset H$ as in \eqref{sec:section1} and that the restriction $A_{B}$ of $A$ to
 $B$ is also almost $m$-dissipative. Let $L$ be a square-integrable mean zero L\'{e}vy process taking values in a Hilbert space $K$.
 I.e.\ $L=\bigl(L(t)\bigr)_{t\geq0}$ takes values in a Hilbert space $K$, has independent, stationary, increments, one has $L(0)=0$, and $L(t)$ is
 stochastically continuous (see \cite{PeZa}, Def. 4.1, p.\ 38). Let $Q$ be the covariance of $L$. Then $Q^{\frac{1}{2}}(K)$ is the reproducing kernel Hilbert space
 (RKHS) of $L$, assume $Q^{\frac{1}{2}}(\textcolor{red}{K})$ is embedded into $U$. \\
We recall the following basic notions and results.
\begin{definition}
    Let $\nu$ be a finite measure on a Hilbert space $U$ such that $\nu(\left\{0\right\})=0$.
A compound Poisson process with L\'evy measure (also called jump
intensity measure) $\mu$ is a c$\grave{a}$dl$\grave{a}$g L\'evy
process $L$ satisfying
\begin{align*}
   P(L(t)\in \Gamma)= e^{-\nu(U)t}\sum_{k=0}^\infty \frac{t^k}{k!} \nu^{*k}(\Gamma), \qquad t\geq 0, \Gamma\,\in\, \mathcal{B}(U).
\end{align*}
$\mathcal{B}(U)$ being the $\sigma-$algebra of Borel subsets of $U.$
\end{definition}
Given a Borel set $I$ separated from $0$, write
\begin{align*}
  \pi_I(t)=\sum_{s\leq t} \chi_I(\Delta L(s)), \qquad t\geq 0.
\end{align*}
The c$\grave{a}$dl$\grave{a}$g property of $L$ implies that $\pi_I$
is $\mathbb{Z}_+$-valued. We notice that it is a L\'evy process with
jumps of size
 $1$ and thus, a Poisson process (see [\cite{PeZa}, Proposition 4.9 (iv)] for more details.) We also have that $\mathbb{E} \pi_I(t)= t \mathbb{E} \pi_I(t)=t\nu(I)$,
  where $\nu$ is a measure that is finite on sets separated from $0$. We shall write
\begin{align*}
    L_I(t)=\sum_{s\leq t} \chi_I(\Delta L(s)) \Delta L(s).
\end{align*}
Then $L_I$ is a well-defined L\'evy process. The theorem below
provides the corresponding L\'evy-Khinchine decomposition:
\begin{theorem}
    \begin{enumerate}
        \item If $\nu$ is a jump intensity measure corresponding to a L\'evy process then
\begin{align*}
    \int_U (|y|^2_U\wedge 1) \nu(\dd y) <\infty.
\end{align*}
   \item Every L\'evy process has the following representation:
   \begin{align*}
      L(t):= at + \sqrt{Q}W(t)+\sum_{k=1}^\infty \left(L_{I_k}(t)- t \int_{I_k} y \nu(\dd y)\right)+
    L_{I_0}(t),
\end{align*}
  where $I_0:=\left\{ x: |x|_U\geq r_0\right\}$, $I_k:= \left\{ x: r_k \leq |x|_U < r_{k-1}\right\}$, $(r_k)$ is an arbitrary sequence decreasing to $0$, $W$ is a Wiener process, all members of the representation are independent processes and the series converges $\mathbb{P}-a.s.$, uniformly on each bounded subinterval $[0,\infty)$.
    \end{enumerate}
\end{theorem}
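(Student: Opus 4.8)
The plan is to deduce both parts from the Poisson structure of the jumps, which has already been set up in the preceding discussion: for any Borel set $I$ separated from $0$ the counting process $\pi_I$ is Poisson with $\EE\,\pi_I(t)=t\nu(I)$, so in particular $\nu(I)<\infty$, and $L_I$ is a well-defined compound Poisson process.

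For part (i) I would argue annulus by annulus. Each $I_k=\{r_k\le|y|_U<r_{k-1}\}$ is separated from $0$, so $L_{I_k}$ is compound Poisson and the compensated process $M_k(t):=L_{I_k}(t)-t\int_{I_k}y\,\nu(\dd y)$ is a mean-zero, square-integrable martingale with $\EE\,|M_k(t)|_U^2=t\int_{I_k}|y|_U^2\,\nu(\dd y)$; this identity follows by conditioning on the Poisson number of jumps in $[0,t]$ and using that the jump sizes are i.i.d.\ with law $\nu|_{I_k}/\nu(I_k)$. Because the $I_k$ are pairwise disjoint, the $M_k$ have orthogonal increments, so $\EE\,|\sum_{k=1}^N M_k(t)|_U^2=t\sum_{k=1}^N\int_{I_k}|y|_U^2\,\nu(\dd y)$. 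The partial sums are dominated in $L^2$ by the square-integrable jump part of $L$, whence $\sum_k\int_{I_k}|y|_U^2\,\nu(\dd y)=\int_{\{|y|_U<r_0\}}|y|_U^2\,\nu(\dd y)<\infty$. Combining this with $\int_{\{|y|_U\ge r_0\}}1\,\nu(\dd y)=\nu(I_0)<\infty$ gives $\int_U(|y|_U^2\wedge1)\,\nu(\dd y)<\infty$.

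For part (ii) I would isolate three contributions. The large jumps $|y|_U\ge r_0$ occur only finitely often on each bounded interval, so they assemble into the compound Poisson process $L_{I_0}$. The small jumps are handled through the compensated sum $\sum_k M_k(t)$: by the $L^2$ orthogonality just established it converges in $L^2$ for each $t$, and by Doob's maximal inequality applied to the independent martingale blocks together with a Borel--Cantelli argument it converges $\PP$-a.s., uniformly on bounded subintervals. Setting $Y(t):=L(t)-L_{I_0}(t)-\sum_k M_k(t)$ then yields a L\'evy process whose paths have no jumps and are therefore continuous; a continuous L\'evy process on a Hilbert space is Gaussian, hence of the form $Y(t)=at+\sqrt{Q}W(t)$ with $W$ a Wiener process and $a$ the drift. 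Independence of $L_{I_0}$, of the $M_k$, and of $Y$ follows from the independence of jumps attached to pairwise disjoint sets together with the independence of the Gaussian part from the jump part.

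The main obstacle I expect is the convergence step for the compensated small-jump series: one must upgrade the straightforward $L^2$-convergence of $\sum_k M_k(t)$ to almost-sure convergence that is moreover uniform on compact time intervals, which requires a maximal inequality combined with the independence of the blocks. One then has to justify rigorously that the continuous remainder $Y$ is genuinely Gaussian, via the Hilbert-space characterization of continuous L\'evy processes, in order to read off the drift $a$ and the Wiener term $\sqrt{Q}W$.
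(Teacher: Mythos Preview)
The paper does not prove this theorem at all: it is stated as a background result (the classical L\'evy--It\^o decomposition on a Hilbert space) and the surrounding text makes clear that it is being quoted from the literature, in particular from Peszat--Zabczyk \cite{PeZa}. So there is no ``paper's own proof'' to compare against; your sketch is a reasonable outline of the standard textbook argument.

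That said, one step in your part~(i) is circular as written. You say the partial sums $\sum_{k=1}^N M_k$ are ``dominated in $L^2$ by the square-integrable jump part of $L$'', and from this you read off $\sum_k\int_{I_k}|y|_U^2\,\nu(\dd y)<\infty$. But the existence of a square-integrable jump part is essentially the conclusion you are after; nothing in the hypotheses guarantees that $L$ itself, or its jump part, lies in $L^2$. The usual route is to first subtract the large-jump process $L_{I_0}$ and observe that $Y:=L-L_{I_0}$ is a L\'evy process with jumps bounded by $r_0$; one then proves the separate lemma that a L\'evy process with bounded jumps has finite moments of all orders (via an exponential-moment or stopping-time argument). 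Only after that do you get $\EE|Y(1)|_U^2<\infty$, which is what actually forces $\sum_k\int_{I_k}|y|_U^2\,\nu(\dd y)<\infty$ through your orthogonality computation. Once this gap is filled, the rest of your outline for part~(ii) --- $L^2$ convergence of $\sum_k M_k$, upgrade to a.s.\ uniform convergence via Doob and Borel--Cantelli, identification of the continuous remainder as Gaussian --- is the standard line and matches what one finds in \cite{PeZa}.
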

In the following (see Hypothesis \ref{hp:A+F}), with no loss of
generality, we assume that
\begin{align}\label{eq:medianu}
\sum_{k=1}^\infty \int_{I_k} y \nu(\dd y) =0.
\end{align}
We also assume throughout that the L\'evy process is a pure jump process, i.e. $a=0$
and $Q=0$ and that
\begin{align}\label{eq:momenti}
   \int_U |y|^m \nu(\dd y) <\infty, \qquad for \ all \ m\in \mathbb{N},
\end{align}
which leads to the representation
\begin{align*}
    L(t)= \sum_{k=1}^\infty L_{I_k}(t)+ L_{I_0}(t),
\end{align*}
 in view of assumptions \eqref{eq:medianu} and \eqref{eq:momenti}.

Let $L_A(t)=\int\limits_{0}^{t}S(t-s)\sqrt{Q}\,\mathrm{d}L(s)$, $t\geq0$, be the L\'{e}vy Ornstein-Uhlenbeck process associated with $S,\sqrt{Q},L$,
assumed to exist and have a c\`{a}dl\`{a}g version in $B$ (the latter is satisfied if $B$ is a Hilbert space $K$ and $S(t)$ is a contraction on $K$),
 see e.g. (\cite{PeZa}, p.\ 155), or $S$ is analytic and $L$ takes values in $D\bigl((-A)^{\alpha}\bigr)$ for some $\alpha>0$; see, e.g.
  (\cite{PeZa}, p.\ 155). Assume $F$ is an operator on $H$ (possibly nonlinear, nor everywhere defined) satysfying Hypothesis \ref{hp:A+F}.
%Assume moreover that for all $T>0$ one has $P$-almost surely (a.s.) ($P$ being the underlying probability measure) $\int\limits_{0}^{T}\left|F\bigl(L_A(t)\bigr)\right|_{B}\mathrm{d}t<\infty$.

An adapted $B$-valued process $X$ is said to be a \underline{c\`{a}dl\`{a}g mild solution} to
\begin{equation}\label{eqn:equation3}
    \left\{
        \begin{aligned}
            \mathrm{d}X(t) &= AX(t)\,\mathrm{d}t + F\bigl(X(t)\bigr)\,\mathrm{d}t + \sqrt{Q}\,\mathrm{d}L(t) \\
            X(0) &= x \in D(F)
        \end{aligned}
    \right.
\end{equation}
if it is c\`{a}dl\`{a}g in $B$ and satisfies, $P$-a.s., the equation
\\
$X(t)=S(t)x+\int\limits_{0}^{t}S(t-s)F\bigl(X(s)\bigr)\,\mathrm{d}s+L_A(t)$,
$t\geq0$, with $X(s)\in D(F)$ for $s\geq0$ (\cite{PeZa}, p.\ 182).
The formal definition of mild solution for the stochastic problem
    \eqref{eqn:equation3} is given below; next we recall the definition of stochastic convolution and we list some of its properties.

    \begin{definition}\label{def:stoc-conv}
       Let $u^0\in K$. A predictable $H$-valued process $u:=(u(t))_{t\geq 0}$ is called
        a mild solution to the Cauchy problem \eqref{eq:eps} with initial condition $u^0 \in D(F)$ if for arbitrary $t\geq 0$ we have
       \begin{equation*}
          u(t)=e^{tA}u^0+\int_0^t e^{(t-s)A}F(u(s))ds + \ep \int_0^t e^{(t-s)A} \sqrt{Q}dL(s), \quad \textrm{$\PP$-a.s.}
       \end{equation*}
   $L_A(t) := \int_0^t e^{(t-s)A} \sqrt{Q}dL(s)$ is called a stochastic convolution and under our hypothesis it is a well defined mean square continuous
   $\mathcal{F}_t$-adapted process with values in $B$ and c$\grave{a}$dl$\grave{a}$g trajectories (see e.g., \cite{PeZa}, Proposition 9.28, p.\ 163).
    \end{definition}

   % We shall denote by $\Xi_{p,T}$ the space of all (equivalence classes) of predictable $H$-valued processes $v(t)$, $t\geq 0$, such that
   % \begin{equation*}
   %     \|v(t)\|_{p,T}=\left(\EE \sup_{t \in [0,T]} |v(t)|_H^2\right)^{1/2} < \infty.
   % \end{equation*}

   The first integral on the right hand side is defined pathwise in the Bochner sense, $\mathbb{P}$-almost surely.

   For further use, in the following we introduce some additional condition on the stochastic convolution:
    \begin{hypothesis}\label{prop:StochasticConvolution}
        The stochastic convolution $L_A(t),t\geq 0$ introduced in Definition $\ref{def:stoc-conv}$, admits a $K$-valued version such that,
        for any $T>0$, it satisfies the following estimate
         {\small
         \begin{equation}
          \EE\left(\sup_{t \in [0,T]} | L_A(t) |_K^{m}
%2n+1}
        \right) \leq C_T
         \end{equation}
         }
         for every $m\in \bN$ and some positive constant $C_T$ $($possibly depending on $T)$.
    \end{hypothesis}

    \begin{example}\rm
          Let us give an example for the setting $(H,B,K,L,A,Q)$ where $L_A$ is well-defined and Hypothesis \ref{prop:StochasticConvolution} is satisfied.
           This example is related to the application to the stochastic FitzHugh-Nagumo model which we discuss in Example \ref{Remark:FHN}.
Let $H,B,K$ be as in Example \ref{ex:F}. Let $A=\Delta$ be the
Laplacian in $L^2(\Lambda)$ with Neumann boundary conditions on the
boundary $\partial\,\Lambda$ of the bounded open subset $\Lambda$ of
$\R^n$.
 Let $Q$ be a bounded trace class operator commuting with
  $A$ and $L$ be a L\'evy process such that the corresponding measure $\nu$ satisfies
\begin{align*}
     \int_{L^2(\Lambda)} |x|^m_{W^{\beta,2(2n+1)}} \nu(\dd x) <\infty \quad for \ all \ m\in \bN,
\end{align*}
where $W^{\beta,2(2n+1)}$ is a fractional Sobolev space with given $\beta >0$. Finally, let
  $(A_{18},D(A_{18}))$ denote the generator of the heat semigroup with Neumann boundary conditions on $L^{18}(\Lambda)$.
  By {\bf \cite[Appendix ]{DPZRosso}} $L_A(t)\in D((-A_{2(2n+1)}^\gamma)$, $\gamma>0$; in particular $L_A(t)\in K$, $L_A$ being in addition a L\'evy process.
  This implies the bound in
    Hypothesis \ref{prop:StochasticConvolution}.
    \end{example}

% Theorem

      The next result concerns the existence and uniqueness of solutions for the stochastically perturbed problem. Moreover,
    we shall use Hypothesis \ref{prop:StochasticConvolution} above concerning the Ornstein-Uhlenbeck process associated with $e^{tA},\sqrt{Q}$ and $L$ in order
    to prove a useful estimate on the solution.

\begin{thm}
       Assume that $A$ and $F$ satisfy Hypothesis $\ref{hp:A+F}$. Assume that $A$ and $Q$ satisfy Hypothesis $\ref{prop:StochasticConvolution}$.
       Then there exists a  unique c\`{a}dl\`{a}g mild solution of (\ref{eqn:equation3}) for any $x\in B$. For each $x\in H$ there exists a unique generalized
       solution for (\ref{eqn:equation3}) (in the sense that $\exists(X_n)_{n\in\mathbb{N}}$, $X_n\in B$, unique c\`{a}dl\`{a}g mild solutions of \eqref{eqn:equation3} with $X_n(0)=x$
        s.t.\ $\left| X_n(t)-X(t) \right|_{H}\to0$ uniformly on each bounded interval). Moreover (\ref{eqn:equation3}) defines Feller families on $B$ and on $H$
         (in the sense that the Markov semigroup $P(t)$ associated with $X(t)$ maps for any $t\geq0$, $C_{b}(H)$ into $C_b(H)$ and $C_b(B)$ into $C_b(B)$).

Moreover, the solution $X$ to (\ref{eqn:equation3}) belongs to the
space  $\mathcal{L}^p(\Omega;C([0,T];H))$,  i.e., is such that
         \begin{equation}\label{stima:u}
             \EE\left( \sup_{t \in [0,T]} \left|X(t)\right|^p_H \right)< +\infty,
         \end{equation}
         for any $p \in[2, \infty)$.
\end{thm}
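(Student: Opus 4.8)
The plan is to remove the noise by subtracting the Ornstein--Uhlenbeck process and then to extract existence, uniqueness, the Feller property and the bound \eqref{stima:u} from the dissipative structure of $A+F$ together with the moment estimates furnished by Hypothesis \ref{prop:StochasticConvolution}. First I would fix the stochastic convolution $L_A(t)=\int_0^t e^{(t-s)A}\sqrt{Q}\,\dd L(s)$, which under Hypothesis \ref{prop:StochasticConvolution} exists, is $\F_t$-adapted, admits a c\`{a}dl\`{a}g $K$-valued version and satisfies $\EE\sup_{t\in[0,T]}|L_A(t)|_K^{m}\le C_T$ for every $m\in\bN$. Setting $Y(t):=X(t)-L_A(t)$, a c\`{a}dl\`{a}g process $X$ is a mild solution of \eqref{eqn:equation3} if and only if, $\PP$-a.s., $Y$ solves the pathwise deterministic equation
\begin{equation*}
    Y(t)=e^{tA}x+\int_0^t e^{(t-s)A}F\bigl(Y(s)+L_A(s)\bigr)\,\dd s,\qquad t\ge 0 .
\end{equation*}

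For almost every fixed trajectory the map $y\mapsto F\bigl(y+L_A(t)\bigr)$ inherits the quasi-$m$-dissipativity of $F$, since translating the argument by a fixed vector leaves the dissipativity inequality of Hypothesis \ref{hp:A+F}(ii) unchanged. Hence existence and uniqueness of a c\`{a}dl\`{a}g mild solution $Y$, and therefore of $X=Y+L_A$ for every $x\in B$, follow from the deterministic theory for $m$-dissipative evolution equations as in \cite[Theorem 7.13]{DPZRosso} and \cite[p.~182]{PeZa}; this is where Hypothesis \ref{hp:A+F}(iv), giving $m$-dissipativity of $A+F$, enters. For $x\in H$ I would approximate $x$ by a sequence $x_n\in B$ (dense in $H$) and use the contraction bound $|X_n(t)-X_m(t)|_H\le e^{-(\omega-\eta)t}|x_n-x_m|_H$, obtained by differentiating $|X_n-X_m|_H^2$ and invoking strict negative type and dissipativity (the common convolution $L_A$ cancels in the difference). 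This shows $(X_n)$ is Cauchy uniformly on bounded intervals and defines the generalized solution; the same bound applied to two initial data yields continuous dependence in $H$ and in $B$, hence the Feller property of the associated Markov semigroup, following \cite{PeZa}.

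The moment estimate \eqref{stima:u} is the analytic heart of the statement. Applying the chain rule to the pathwise equation (rigorously via Yosida approximations of $A$ and of $F$, then passing to the limit) gives
\begin{equation*}
    \frac{\dd}{\dd t}|Y(t)|_H^2 = 2\langle AY(t),Y(t)\rangle + 2\langle F(Y(t)+L_A(t)),Y(t)\rangle .
\end{equation*}
The first term is bounded by $-2\omega|Y|_H^2$; splitting the second as $\langle F(Y+L_A)-F(L_A),Y\rangle+\langle F(L_A),Y\rangle$, dissipativity controls the first summand by $\eta|Y|_H^2$ while Young's inequality absorbs the second, leaving
\begin{equation*}
    \frac{\dd}{\dd t}|Y(t)|_H^2 \le -(\omega-\eta)|Y(t)|_H^2 + \frac{1}{\omega-\eta}\,|F(L_A(t))|_H^2 .
\end{equation*}
Gronwall's lemma then yields $\sup_{t\le T}|Y(t)|_H^2\le |x|_H^2+\tfrac{1}{(\omega-\eta)^2}\sup_{t\le T}|F(L_A(t))|_H^2$. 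The polynomial growth of Hypothesis \ref{hp:A+F}(iii), $|F(L_A)|_B\le\gamma(1+|L_A|_K^{2n+1})$, combined with the continuous embedding $B\hookrightarrow H$ and the moment bound of Hypothesis \ref{prop:StochasticConvolution} for all $m\in\bN$, shows $\EE\sup_{t\le T}|F(L_A(t))|_H^p<\infty$ for every $p$; since $X=Y+L_A$, this gives \eqref{stima:u} for all $p\in[2,\infty)$.

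The main obstacle is the lack of a global Lipschitz bound on $F$: a contraction/fixed-point argument is unavailable, so one must lean on dissipativity to absorb the nonlinearity in every estimate, and on the full scale of moments of $L_A$ (finite for every $m$, not merely $m=2$) to tame the polynomial growth of $F$ after the shift by $L_A$. A secondary technical point is the justification of the energy identity for mild solutions, which requires the regularization step mentioned above.
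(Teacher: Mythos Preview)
Your proposal is correct and follows essentially the same approach as the paper: subtract the stochastic convolution to obtain a pathwise dissipative equation, invoke \cite{PeZa} for existence/uniqueness/Feller, and derive the moment bound via an energy inequality, Gronwall, the polynomial growth of $F$ on $K$, and the $K$-moment bounds on $L_A$. The only cosmetic difference is that the paper carries out the energy estimate directly for $|z|_H^{2a}$ with $p=2a$, whereas you estimate $|Y|_H^2$ first and then raise to the $p/2$-th power; the ingredients and logic are identical.
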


%    The paper is organized as follows.
%    In Section \ref{sec:1} we recall standard results for the solution of equations of types \eqref{eq:det}, \eqref{eq:eps}, \eqref{eq:u1} and \eqref{eq:uk}.
%    Section \ref{sec:2} is devoted to the study of some properties of the nonlinear term $F$, in particular  the $n$-th remainder of its  Taylor expansion. Section \ref{sec:3} is concerned with the proof of the main result on the asymptotic expansion in powers of $\ep$ of the solution of the stochastic equation \eqref{eq:eps}, with explicit coefficients and remainders, and estimates thereof.
%%    which gives the asymptotic behaviour of the Taylor remainder  $R_n(t,\varepsilon)$ derived in previous section.
%We conclude with some remarks on applications of the results, in particular concerning the stochastic FitzHugh-Nagumo equation.

%    \begin{proposition}\label{prop:MildStoc}
%       Then for any $u^0 \in D(F)$  and $T>0$, there
%         exists a unique mild solution $u=(u(t))_{0 \leq t \leq T} $  of the equation
%       \eqref{eq:eps} $($cf. Definition $\ref{def:stoc-conv})$ which belongs to the space
%   $\mathcal{L}^p(\Omega;C([0,T];H))$,  i.e., such that
%         \begin{equation}\label{stima:u}
%             \EE\left( \sup_{t \in [0,T]} \left|u(t)\right|^p_H \right)< +\infty,
%         \end{equation}
%         for any $p \in[2, \infty)$.
%    \end{proposition}

    \begin{proof}

    The first part of the result is proven in (\cite{PeZa}, Theorem 10.14).
    We only have to prove the estimate \eqref{stima:u}.
     Let $z(t) := X(t) - L_A(t)$;  then it is not difficult to show that $z(t)$ is the unique solution of the following deterministic equation:
     $$
     \begin{cases}
      z^\prime(t) = Az(t) + F(z(t)+L_A(t)) \\
      z(0) = u^0
     \end{cases}
     $$
     with $z^\prime (t) := \frac{d}{dt} z(t)$.

     With no loss of generality (because of inclusion results for $L^p$-spaces with respect to bounded measures) we can assume that $p=2a$, $a \in \N$.
     Now combining condition $(i)$ with $(i)$ in Hypothesis (\ref{hp:A+F}) and recalling
     Newton's binomial formula we have:
     \begin{equation}\label{eq:ItoFormula}
     \begin{aligned}
      \frac{d}{dt} |z(t)|_H^{2a} &=
       2a  \langle z^\prime(t),z(t) \rangle |z(t)|_H^{2a-2} = 2a \langle  Az(t) + F(z(t)+L_A(t)),z(t) \rangle |z(t)|_H^{2a-2} \\
       & \leq
       -2a \omega |z(t)|_H^{2a} + 2a \langle  F(z(t)+L_A(t)),z(t) \rangle |z(t)|_H^{2a-2} \\
       & \leq
        -2a (\omega -\eta)  |z(t)|_H^{2a} +2a | F(L_A(t)) |_H |z(t)|_H^{2a-1} \\
       &\leq
       -2a (\omega -\eta)  |z(t)|_H^{2a} + 2a \frac{C_a}{\xi}  | F(L_A(t)) |_H^{2a} +C_a  2a \xi | z(t) |^{2a}_H \:,
     \end{aligned}
     \end{equation}
     for some constant $C_a >0$ and a sufficiently small $\xi>0$ such that $-2a (\omega-\eta) +2a  \xi C_a  <0$.
     %  we made use of conditions (1) and (2) in Hypothesis \ref{hp:A+F} to obtain the above first resp. second inequality, while the third inequality is obtained by a Newton expansion of the $2a$ power of a sum.
     Applying the previous inequality and Gronwall's lemma we get:
     $$
       |z(t)|_H^{2a} \leq e^{(-2a (\omega-\eta) +\xi C_a 2a) t}  |u^0|_H^{2a} +
      \frac{ 2a C_a }{\xi}\int_0^t e^{-2a (\omega-\eta) (t-s)} |F(L_A(s))|_H^{2a} ds.
     $$
     Then  there exists a positive constant  $C$ such that:
     \begin{equation}\label{eq:Dis2N}
     |X(t)|_H^{2a} \leq C \left( e^{(-2a (\omega-\eta) +\xi C_a 2a) t}  |u^0|_H^{2a} +
       2a \int_0^t e^{-2a (\omega-\eta) (t-s)} |F(L_A(s))|_H^{2a} ds + |L_A(t)|_H^{2a} \right) .
     \end{equation}
     Since by condition $(iii)$ in Hypothesis \ref{hp:A+F}, the restriction of $F$ to $K$ has (at most)
     polynomial growth at infinity in the $K$-norm and, by the assumption on $L_A(t)$ made in Hypothesis \ref{prop:StochasticConvolution},
      $L_A$ takes value in $K$,  for any $a\in \N$ we have:
     $$
      |F(L_A(t))|_H^{2a} \leq
      C_{a,m} (1+|L_A(t)|_K^m)^{2a} \leq C_{a,m} (1+|L_A(t)|_K^{2am}),
     $$
     for some positive constant $C_{a,m}$ depending on $m$ and $a$.
     Moreover, we observe that, again by Hypothesis \ref{prop:StochasticConvolution}, it holds that
      $$
      \EE\left(\sup_{t \in [0,T]} |L_A(t)|_K^{2am}  \right)\leq C_{a,m,T}^\prime,
      $$ where $C_{a,m,T}^\prime$ is again a positive constant depending on $m$, $a$ and $T$; hence
     \begin{multline}\label{nonloso}
     %\begin{aligned}
      \EE \left[\sup_{t\in [0,T]} \int_0^t e^{-2a (\omega-\eta) (t-s)} |F(L_A(s))|_H^{2a} ds \right] \leq
      \tilde{C}  \EE  \left[\sup_{t \in [0,T]} \int_0^t e^{-2a (\omega-\eta) (t-s)}
      (1+|L_A(t)|_K^{2am})  ds \right] \\
      \leq \tilde{C}  \EE \left[ \sup_{t \in [0,T]} \int_0^t e^{-2a (\omega-\eta) (t-s)} ds +
       C_{a,m}^\prime \int_0^t e^{-2a (\omega-\eta)} ds \right]  \leq \bar{C}\:,
      % \end{aligned}
     \end{multline}
     for some positive constants $\widetilde{C}, \bar{C}$ depending on $a$, $m$ and $T$. Consequently,
putting together inequalities \eqref{eq:Dis2N}, \eqref{nonloso}, we obtain
   $$
       \EE\left( \sup_{t\in [0,T]}|X(t)|_H^{2a}\right) \leq C |u^0|_H^{2a} + \bar{\bar{C}} \:,
$$
for some positive constant $\bar{\bar{C}}$, so that the proposition follows.
%  Now the thesis follows taking the expectation in \eqref{eq:Dis2N} and
  %   \eqref{nonloso}.

    \end{proof}

    \section{Properties of the non-linear term $F$ and Taylor expansions} \label{sec:2}
    In this section we study the non-linear term $F$ in order to write its Taylor expansion around the solution $\phi(t)$ of \eqref{mildsolution} with respect to an increment given in terms of powers of $\varepsilon$.
    In order to do that we recall some basic properties of Fr\'echet differentiable functions.

    Let $U$ and $V$ be two real Banach spaces. For a mapping
$F:U\to V$ the   G\^ateaux differential at $u \in U$ in the direction
 $h\in U$ is defined as
  $$
  \nabla F(u)[h]=\lim_{s\to 0}\frac{F(u+sh)-F(u)}{s},
  $$
  whenever the limit exists in the topology of $V$ (see for example \cite[p. 12]{Lal}).

%%%%%%%%%%%%%%%%%%%%%%%%%%%%%%%%%%%%%%%%%%%%%%%%%%%%%%%%%%%%%%%%%%%%%%%%%%
%%%%%%%%%%%%%%%    INIZIO NUOVA PARTE  %%%%%%%%%%%%%%%%%%%%%%%%%%%%%
%%%%%%%%%%%%%%%%%%%%%%%%%%%%%%%%%%%%%%%%%%%%%%%%%%%%%%%%%%%%%%%%%%%%%%%%%%

We notice that if $\nabla F(u) [h]$ exists in a neighborhood of $u_0 \in U$ and is continuous in $u$ at $u_0$ and also continuous in $h$ at $h=0$, then $\nabla F (u) [h]$ is linear in $h$ (see for instance \cite[Problem 1.61, p 15]{Lal}).
If $\nabla F (u_0) [h]$ has this property for all $u_0 \in U_0 \subseteq U$ and all $h \in U$ we shall say that $F$ belongs to the space $G^1(U_0;V)$.
If $F$ is continuous from $U$ to $V$ and $F \in G^1(U_0;V)$ and one has $F(u+h) = F(u) + \nabla F(u)[h]+R(u,h)$, for any $u \in U_0$ with:

\begin{align}\label{eq:FrechetTaylor}
\lim_{\left| h \right|_U \rightarrow 0} \frac{\left| R(u,h) \right|_V }{\left| h \right|_U} =0
\end{align}
with $|\cdot|_V $ and  $| \cdot|_U$  denoting respectively the norm in $V$ and $U$, then the map $ h \rightarrow \nabla F (u) [h]$ is a bounded linear operator from $U_0$ to $V$, and $\nabla F(u)[h]$ is, by definition, the unique Fr\'echet differential of $F$ at $u \in U_0$ with increment $h \in U$.  The function $R(u,h)$ is called the remainder of this Fr\'echet differential, while
the operator sending $h$ into $\nabla F(u) [h]$ is then called the Fr\'echet derivative of $F$ at $u$ and is usually denoted by $F^\prime(u)$ (see for instance \cite[pp. 15-16, Problem 1.6.2 and Lemma 1.6.3]{Lal}).
We have then $\nabla F(u) [h] = F^\prime(u) \cdot h$, with the symbol $\cdot$ denoting the action of the linear bounded operator $F^\prime(u)$ on $h$.

The mapping $F^\prime(u)$ is also called the gradient of $F$ at $u$ (see for example \cite[p. 15]{Lal}) and it coincides with the G\^ateaux derivative of $F$ at $u$.
We shall denote by $\mathcal{F}^{(1)} (U_0,V)$ the subset of $G^1(U_0,V)$ such that the Fr\'echet derivative exists at any point of $U_0$.
Similarly we introduce the Fr\'echet derivative $F^{\prime\prime}(u)$ of $F^\prime$ at $u \in U$.
This is a bounded linear map from a subset $D(F^\prime)$ of $U$ into $L(U,V)$ ($L(U,V)$ being the space of bounded linear operators from $U$ to $V$). One has thus $F^{\prime\prime} \in L(U, L(U,V))$. If we choose $h,k \in U$ then $F^{\prime\prime}(u) \cdot k \in L(U,V)$ and $\left(F^{\prime\prime}(u) \cdot k\right) \cdot h \in V$. The latter is also written $F^{\prime\prime}(u) \;h\;k$ or $F^{\prime\prime}(u) [h,k]$. The mapping
$F^{\prime\prime}(u) [h,k]$ is bilinear in $h,k$, for any given $u \in D(F^{\prime\prime})$ and it can be identified with the G\^ateaux differential $\nabla^{(2)} F(u) [h,k]$ of $\nabla F(u)[h]$ in the direction $k$, the latter looked upon as a map from $U$ to $L(U,V)$.
Similarly one defines the $j$-th Fr\'echet derivative $F^{(j)}(u)$ and the $j$-th G\^ateaux derivative $\nabla F^{(j)}(u) [h_1, \ldots, h_j]$. The function
$F^{(j)}(u)$ acts $j$-linearly on $h_1,\ldots,h_j$ with $h_i \in U$ for any $i=1,\ldots,j$.
Let $U_0$ be an open subset of $U$ and consider the space $\mathcal{F}^{(j)}(U_0,V)$ of maps $F$ from $U$ to $V$ such that $F^{(j)}(u)$ exists at all $u \in U_0$ and is uniformly continuous on $U_0$. The following Taylor formula holds for any $u,h \in U$ for which $F(h)$ and $F(u+h)$ are well defined (i.e. $h$ and $u+h$ are elements of $D(F)$), and $j=1,\ldots,n+1$ with $u \in \cap_{j=1}^n \mathcal{F}^{(j)}(U_0,V)$:
\begin{equation}\label{eq:FrechetTaylorn}
 F(u+h) = F(u) + \nabla F (u) [h]+ \frac{1}{2} \nabla^{(2)}F(u) [h,h]+ \cdots + \frac{1}{n !} \nabla^{(n)} F(u) \underbrace{[h,\ldots,h]}_{\textrm{$n$-terms}}
 +R^{(n)}(u;h) \:,
\end{equation}
where $\left| R^{(n)}(u;h) \right|_U \leq C_{u,n} \cdot \left|h\right|_U^n$ for some
constant $C_{u,n}$ depending only on $u$ and $n$ (see for example \cite[Theorem X.1.2]{KolmogorovFomin}).

    Now let us consider the case $U = H$, with $H$ being the same Hilbert space appearing in
problem \eqref{eq:det}.
    %%%%%%%%%%%%%%%%
% For any $p\in [2,\infty)$ and $T>0$, we shall denote by $\Xi_{p,T}$ the space of all (equivalence classes) of predictable $H$-valued processes $v(t)$, $t\geq 0$, such that
  %  \begin{equation*} \label{eq:SpazioXiPT}
    %    \|v(t)\|_{p,T}=\left(\EE \sup_{t \in [0,T]} |v(t)|_H^p\right)^{1/p} < \infty.
     %   \end{equation*}
  %
    Let $F$ be as in Hypothesis \ref{hp:A+F} and set $U_0 = D(F)$. Let us define
     for $0< \varepsilon\leq 1$ the function $h(t)$, $t \geq 0 $:
    $$
     h(t) = \sum_{k=1}^n \varepsilon^k u_k(t)+r^{(n)}(t;\ep) \:,
    $$
    %which is an elment of $\Xi_{p,T}$.
    where the functions $u_k(t),k=1,\dots,n$ and $r^{(n)}(t;\ep)$ are $p$-mean integrable continuous stochastic processes with values in $H$, defined on the whole interval $[0,T]$ for $p \in [2,\infty)$. Moreover we suppose $r^{(n)}(\cdot;\ep) ={\bf o}(\ep^{n})$, i.e.,
  \begin{align*}
      \lim_{\ep \to 0}\EE\left[ \sup_{t\in [0,T]}\frac{|r^{(n)}(t;\ep)|^p}{\ep^n}\right] =0, \qquad for \ any \ T>0.
\end{align*}
    %$H-$valued process $\Xi_{p,T}$, $p$-mean integrable continuous stochastic processes  on the whole interval $[0,+\infty)$ for $p \in [2,\infty)$.
    Let $\phi$ be a $p$-mean integrable continuous stochastic process with values in the Banach space $K$. Then using the above Taylor formula we have
  \begin{equation}\label{eq:nablacasoconcreto}
\begin{aligned}
    F(\phi(t)+h(t))= F(\phi(t))+\nabla F (\phi(t)) [h(t)]+ \frac{1}{2}\nabla^{(2)} F[h(t),h(t)]+\cdots\\
    \qquad \qquad \qquad +
 \frac{1}{n !} \nabla^{(n)} F(u) \underbrace{[h(t),\ldots,h(t)]}_{\textrm{$n$-terms}}
 +R^{(n)}(\phi(t);h(t)) \:,
\end{aligned}
\end{equation}
   and, recalling that for any $j=1,\dots,n$, $\nabla^{(j)}F(\phi(t))$ is multilinear, we have
\begin{equation}\label{eq:oj}
   \begin{aligned}
    &\frac{1}{j!} \nabla^{(j)}F(\phi(t)) \underbrace{[h(t),\dots,h(t)]}_{\textrm{$j$-terms}}=\\
     &\qquad \qquad \frac{1}{j!}\sum_{k_1+\dots+k_j=j}^{nj} \ep^{k_1+\dots+k_j} \nabla^{(j)} F(\phi(t)) [u_{k_1}(t),\dots,u_{k_j}(t)]
      + {\bf o}_j(\ep^{nj})
\end{aligned}
\end{equation}
where ${\bf o}_j(\ep^{nj})$ is the contribution to the right member of the above equality coming from the term $r^{(n)}(t;\ep)$ and satisfies the estimate
\begin{align*}
     \lim_{\ep \to 0} \EE \left[\sup_{t\in [0,T]} \frac{|{\bf o}_j(\ep^{nj})|^p}{\ep^{nj}} \right]=0, \quad for \ any \ T >0.
\end{align*}
We notice that any derivative appearing in the  member on the right hand side of \eqref{eq:oj} is multiplied by the parameter $\ep$ raised to a power
between $j$ and $nj$.

   Taking into account the above equality we can rewrite \eqref{eq:nablacasoconcreto} as
    \begin{equation} \label{TaylorF}
      \begin{aligned}
       F(\phi(t)+h(t)) &= F(\phi(t)) +  \sum_{k=1}^n \ep^k \nabla F(\phi(t)) [u_k(t)]  \\
       &+ \sum_{j_1+j_2=2}^n
       \frac{\ep^{j_1+j_2}}{2!} \nabla^{(2)} F(\phi(t)) [u_{j_1}(t),u_{j_2}(t)]   + \cdots \\
       &+ \sum_{ j_1+\dots + j_k = k}^n
       \frac{\ep^{j_1+\dots+j_k}}{k!} \nabla^{(k)} F(\phi(t)) [u_{j_1}(t),\ldots,u_{j_k}(t)] +  \cdots \\
       &+ \frac{ \ep^n}{n!} \nabla^{(n)} F(\phi(t)) [u_{1}(t),\ldots,u_{1}(t)] + R_1^{(n)}(\phi(t);h(t),\ep)\:,
      \end{aligned}
    \end{equation}
    where the quantity $ R_1^{(n)}(\phi(t);h(t),\ep) $ is given in terms of the derivatives of $F$ with
the parameter $\ep$ raised to powers greater than $n$, in terms of the $n$-th remainder $R^{(n)} (\phi(t);h(t))$ in the Taylor expansion of the map $F$ (as stated in  equation \eqref{eq:FrechetTaylorn}) and in terms of the remainders ${\bf o}_j(\ep^{nj})$, $j=2,\dots, n$ introduced in \eqref{eq:oj}. Namely, we have:
% and in terms of the derivatives  depends on the , on the     solution  $\phi(t)$ of the deterministic equation \eqref{eq:det}  and on the terms containing  the parameter $\ep$  taken in powers larger of equal $n$, in fact we have:
     \begin{equation}\label{R1}
\begin{aligned}
       &R_1^{(n)} (\phi(t);h(t),\ep) = \sum_{j=2}^n\sum_{i_1+\cdots+i_j  = n+1}^{nj} \ep^{i_1+\dots+i_j}
       \frac{1}{j!} \nabla^{(j)} F(\phi(t)) [u_{i_1}(t), \dots , u_{i_j}(t)] \\
      &  \qquad \qquad \qquad \qquad \qquad + \sum_{j=2}^n {\bf o}_j(\ep^{nj})
       + R^{(n)}(\phi(t);h(t)),
\end{aligned}
     \end{equation}
     $R^{(n)}(\phi(t);h(t))$ being as in \eqref{eq:FrechetTaylorn} (with $u$ replaced by $\phi$). In this way equation \eqref{TaylorF} can be rearranged as
     \begin{equation} \label{EpsilonTaylorF}
      \begin{aligned}
       & F(\phi(t)+h(t))  \\
       &\quad =  F(\phi(t)) +
      \sum_{j=2}^n  \ep^{j}\left( \sum_{i_1+\dots+i_j=j}^n
        \frac{1}{j!} \nabla^{(j)} F(\phi(t)) [u_{i_1}(t), \dots , u_{i_j}(t)] \right)\\
      & \quad + R_1^{(n)}(\phi(t);h(t),\ep).
      \end{aligned}
    \end{equation}
    \begin{lemma} \label{lm:R1}
     Let $R_1^{(n)}$ be as in formula \eqref{R1}. Then for all $p \in [2, \infty)$ and $T >0$ there exists a constant $C>0$, depending on $| \phi |_{K},\ldots,| u_n |_{H},\nabla^{(1)} F,\ldots, \nabla^{(n)} F,p,n$, such that:
     $$
      \EE\left[\sup_{t \in [0,T]} | R_1^{(n)} (\phi(t);h(t),\ep) |_H^p  \right] \leq C \ep^{p(n+1)}
     $$
     for all $0< \ep \leq 1$.
    \end{lemma}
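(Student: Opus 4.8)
The plan is to estimate each of the three groups of terms appearing in formula \eqref{R1} separately and then combine them via the triangle inequality in the $L^p(\Omega; C([0,T];H))$ norm, using throughout the polynomial growth bounds on the Fréchet derivatives from Hypothesis \ref{hp:A+F}(iii) together with the uniform moment bounds on $\phi$ and on the processes $u_1,\dots,u_n$. I would first observe that the decisive feature of \eqref{R1} is that every explicit term carries a power of $\ep$ that is at least $n+1$: in the double sum the constraint $i_1+\cdots+i_j \geq n+1$ forces $\ep^{i_1+\cdots+i_j}$ to be of order at least $\ep^{n+1}$, while the remainders ${\bf o}_j(\ep^{nj})$ and $R^{(n)}(\phi(t);h(t))$ are controlled by the definitions recalled in \eqref{eq:oj} and \eqref{eq:FrechetTaylorn} respectively.

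For the first (double) sum I would proceed term by term. For fixed $j$ and a multi-index $(i_1,\dots,i_j)$ with $i_1+\cdots+i_j \geq n+1$, the multilinearity of $\nabla^{(j)}F(\phi(t))$ and the operator-norm bound of Hypothesis \ref{hp:A+F}(iii)(b) give, pointwise in $\omega$ and $t$,
\begin{equation}\label{eq:planbound}
 \left| \nabla^{(j)} F(\phi(t)) [u_{i_1}(t),\dots,u_{i_j}(t)] \right|_H
 \leq \gamma_j \bigl(1+|\phi(t)|_K^{2n+1-j}\bigr)\,\prod_{l=1}^j |u_{i_l}(t)|_H .
\end{equation}
Taking the supremum over $t\in[0,T]$, raising to the $p$-th power, applying the generalized Hölder inequality to split the expectation of the product, and invoking the finiteness of $\sup_{t}\EE|\phi(t)|_K^{q}$ (from the remark following Proposition \ref{prop:MildDeterministica}) and of $\sup_{t}\EE|u_{i_l}(t)|_H^{q}$ for all $q\geq 1$, one bounds the $L^p$ norm of each such term by a finite constant. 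Since each such term is multiplied by $\ep^{i_1+\cdots+i_j}$ with exponent at least $n+1$, and $0<\ep\leq 1$, every contribution is $\leq C\,\ep^{p(n+1)}$ after raising to the $p$-th power; the finitely many terms are then summed.

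For the second group, the terms ${\bf o}_j(\ep^{nj})$, I would use directly the estimate recorded under \eqref{eq:oj}, namely that $\EE[\sup_t |{\bf o}_j(\ep^{nj})|^p/\ep^{nj}]\to 0$; since $nj\geq n+1$ for $j\geq 2$ (as $n\geq 1$) these are of the required order. For the final term $R^{(n)}(\phi(t);h(t))$ I would use the Taylor remainder bound $|R^{(n)}(\phi(t);h(t))|_H \leq C_{\phi,n}\,|h(t)|_H^{n}$ from \eqref{eq:FrechetTaylorn}, together with the fact that $h(t)=\sum_{k=1}^n \ep^k u_k(t)+r^{(n)}(t;\ep)$ is of order $\ep$ in the $L^p$ sense (the leading term being $\ep u_1(t)$ and $r^{(n)}={\bf o}(\ep^n)$), so that $|h(t)|_H^{n}$ contributes at order $\ep^{n}$—and in fact at order $\ep^{n+1}$ once the remainder structure of the Taylor formula is accounted for, which is where some care is needed to avoid losing a power of $\ep$. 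I expect the main obstacle to lie precisely in this last step: showing that $R^{(n)}$ gains the full power $\ep^{n+1}$ rather than merely $\ep^{n}$. The cleanest route is to apply the Taylor expansion to one order higher (using that $F\in\mathcal{F}^{(n+1)}$ via the differentiability hypothesis) so that the leading uncontrolled term is the $(n+1)$-st derivative acting on $h(t)^{\otimes(n+1)}$, whose $L^p$ norm is $O(\ep^{n+1})$ by the same Hölder-and-moments argument as above; collecting all three groups then yields the claimed bound $C\,\ep^{p(n+1)}$.
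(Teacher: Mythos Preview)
Your three-part decomposition and term-by-term estimation via the growth bounds of Hypothesis~\ref{hp:A+F}(iii) is exactly the paper's approach. On the Taylor remainder $R^{(n)}$ your concern is well placed and the paper resolves it precisely as you propose: it invokes the bound $|R^{(n)}(\phi(t);h(t))|_H \leq \hat C_n\,|h(t)|_H^{\,n+1}$ (i.e.\ effectively the one-order-higher expansion) and then uses $|h|=O(\ep)$ to obtain $O(\ep^{\,n+1})$; note also that $\phi$ is deterministic and bounded in $K$, so the factor $(1+|\phi(t)|_K^{2n+1-j})$ in your displayed bound can simply be pulled out as a constant rather than handled via H\"older.
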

    \begin{proof}
     First of all we notice that
    $$
       \sum_{j=2}^n {\bf o}_j(\ep^{nj}) = {\bf O}(\ep^{2n}),
     $$
   meaning that
   \begin{align}\label{eq:O}
       \left|\sum_{j=2}^n{\bf o}(\ep^{nj})\right|\leq C_n \ep^{2n}, \qquad \ep \to 0,
\end{align}
    for some constant $C_n>0$.
     Now since:
     \begin{align*}
       &R_1^{(n)} (\phi(t);h(t),\ep) = \sum_{j=2}^n\sum_{i_1+\ldots+i_j  = n+1}^{nj} \ep^{i_1+\dots+i_j}
       \frac{1}{j!} \nabla^{(j)} F(\phi(t)) [u_{i_1}(t), \dots , u_{i_j}(t)] \\
      &  \qquad  \qquad \qquad \qquad + \sum_{j=2}^n {\bf o}_j(\ep^{nj})
       + R^{(n)}(\phi(t);h(t)),
     \end{align*}
      using the estimate given in condition (3.b) in Hypothesis \ref{hp:A+F} and
    \eqref{eq:O}, for $\ep \in (0,1]$ we have
     \begin{equation}
     \begin{aligned}\label{eq:StimaR_1}
     & | R_1^{(n)} (\phi(t);h(t),\ep) |_H^p  \\
    &   \qquad \leq C_{n,p}^{1} \ep^{(n+1)p} \left[ \left( \max_{j=1,\ldots,n} \| \nabla^{(j)} F (\phi(t))\|_{L^j(K)} \right)^p
      \left( \sum_{i=1}^n |u_i(t)|_H^p \right) \right]
 \\
 &\qquad \qquad \qquad +( {\bf O}(\ep^{2n}))^p+ C_{n,p}^2\left| R^{(n)}\left(\phi(t);h(t) \right) \right|_H^p\\
      & \qquad  \leq C_{n,p}^{(1)} \ep^{(n+1)p}  \max_{j=1,\dots,n} \left[ \gamma_j^p(1+|\phi(t)|_K^{m-j})^p \right] \left( \sum_{i=1}^n |u_i(t)|_H^p \right)\\
     & \qquad \qquad \qquad+C_n \ep^{2np}+ C_{n,p}^{(2)}|R^{(n)}(\phi(t);h(t))|_H^p\\
&\qquad  \leq\tilde{C}_n  \ep^{(n+1)p} + C_{n,p}^{(2)}|R^{(n)}(\phi(t);h(t))|_H^p,
      \end{aligned}
       \end{equation}
       where $C^1_{n,p},C_{n,p}^{(1)},C_{n,p}^{(2)}$ are constants depending only on $n,p$ and the constant $C_n$ in \eqref{eq:O} while $\tilde{C}_n$ is a suitable positive constant depending on $p,n, \max_{j=1,\dots,n} \left[ \gamma_j^p(1+|\phi(t)|_K^{m-j})^p \right]$ ($\gamma_i$ being the constants appearing in Hypothesis \ref{hp:A+F}, condition (3))  and $\left| u_i(t) \right|_H^p$, $i=1,\ldots,n$.
        We notice that the above inequality follows by recalling that the deterministic
function $\phi(t)$ is bounded (in the $H$-norm) (see Proposition \ref{prop:MildDeterministica}).

 Now by the bound on $R^{(n)}$ in the equation \eqref{eq:FrechetTaylorn} we have that
$$
        |R^{(n)}(\phi(t);h(t))|_H^p \leq \hat{C}_n |h(t)|_H^{(n+1)p}
       $$
       with $\hat{C}_n$ depending on $\phi(t)$ and $n$ but independent of $h(t)$.
       Since $h(t) = \sum_{k=1}^n \ep^k u_k(t)+r^{(n)}(t;\ep)$ with $|r^{(n)}(t;\ep)|\leq C_n \ep^{n+1}$ for some $\tilde{C}_n$, then:
      \begin{equation}\label{eq:StimaR^n}
        |R^{(n)}(\phi(t);h(t))|_H^p \leq \ep^{(n+1)p} \hat{C}_{n,p}(|u_1(t)|_H,\ldots,|u_n(t)|_H)
       \end{equation}
       with $\hat{C}_{n,p}= \hat{C}_{n,p} (\left|u_1(t)\right|_H, \ldots, \left|u_n(t)\right|_H)$ independent of $\ep$.

       Hence by \eqref{eq:StimaR_1} and \eqref{eq:StimaR^n} we have that
       $$
        \EE\left[\sup_{t \in [0,T]}  | R_1^{(n)} (\phi(t);h(t),\ep) |_H^p \right]
        \leq
        C^\prime_n  \ep^{n+1} ,
       $$
       where $C^\prime_n:=C^\prime_n(p, \nabla^{(1)}F,\ldots,\nabla^{(n)}F,|\phi|_H,\ldots,|u_n|_H )$ is independent of $\ep$. This gives the lemma, with $C = C_n^\prime$.
    \end{proof}

%    Since we would like to concentrate our attention on the parameter $\ep$ we can rearrange $\eqref{TaylorF}$ as follows:
%     \begin{equation} \label{EpsilonTaylorF}
%      \begin{aligned}
%       F(\phi(t)+h(t)) &= F(\phi(t)) +  \sum_{k=2}^n \ep^k
%       \sum_{j=1}^k
%       \sum_{\stackrel{i_1,\ldots,i_j \in \mathbb{N}}{\sum_{l=1}^j i_l = k}}
%       \frac{1}{j!} \nabla^{(j)} F(\phi(t)) [u_{i_1}(t), \dots , u_{i_j}(t)] + \mathcal{R}^{(n)} (t,\ep^{n+1})
%      \end{aligned}
%    \end{equation}
%    where the term $\mathcal{R}^{(n)}(t,\ep^{n+1})$ is given in terms of $R^{(n)}(\phi(t);h(t))$ and powers of $\varepsilon$  strictly greater than $n$.

  %  $R_n^{(1)},\ldots,R_n^{(n)}$.

   % \textcolor{blue}{Soluzione mild problema deterministico e stocastico}

    As we said before, we want to expand the solution of the equation \eqref{eq:eps} around $\phi(t)$, that is we
    want to write $u(t)$ as:
    \begin{equation} \label{eq:Espansioneu(t)}
        u(t)=\phi(t)+\ep u_1(t)+\dots+\ep^n u_n(t)+R_n(t,\ep),
    \end{equation}
    (with the term $R_n(t,\ep) = {\bf O }(\ep^{n+1}) )$, for any $t\geq 0$),
    where the processes $(u_i(t))_{t\geq 0}, i=1,\dots,n$ can be found by using the Taylor expansion of $F$ around $\phi(t)$ and  \textit{matching terms}
    in the equation \eqref{eq:eps} for $u$.
    Given predictable $H$-valued stochastic processes $w(t), v_1(t),\ldots , v_n(t)$ let us use the notation:
     \begin{equation}\label{eq:phik}
 \Phi_k( w(t))\left[v_1(t), \ldots, v_k(t)\right]:= \sum_{j=2}^k \sum_{i_1+\dots+i_j =k} \nabla^{(j)}F(w(t))[v_{i_1}(t),\dots,v_{i_j}(t)]\:,
\end{equation}
with $i_1,\ldots,i_j$, running from $0$ to $k$ and the given restriction $i_1+ \cdots + i_n =k$.
    With the above notation the processes
    $u_1(t),\dots,u_n(t)$ occurring in \eqref{eq:Espansioneu(t)} satisfy the following equations:
    $$
   \begin{cases}
        du_1(t)= [Au_1(t)+\nabla F(\phi(t))[u_1(t)]]dt + \sqrt{Q}dL(t), \\
      u_1(0)=0,
    \end{cases}
     $$
  and
  \begin{equation}\label{eq:StochasticSystem}
    \begin{cases}
        du_k(t)= [Au_k(t)+\nabla F(\phi(t))[u_k(t)]]dt
               + \Phi_k(t)dt, \\
       u_k(0)=0,\\
    \end{cases}
  \end{equation}
    with
    \begin{equation}\label{eq:Phi_k}
     \Phi_k(t) :=\Phi_k( \phi(t))\left[u_1(t), \ldots,u_{k-1}(t) \right] \:,\: k \in \mathbb{N}, n \geq k \geq 2\:.
    \end{equation}
  %  \textcolor{blue}{Nota su gradiente di $F$ in $\phi(t)$}
    Notice that while $u_1(t)$ is the solution of a linear stochastic differential equation (with time dependent drift operator $A+\nabla F(\phi(t))$), the processes $u_2,\ldots,u_n$ are solutions of non-homogenous differential equations with random coefficients whose meaning is given below.
    \begin{definition}\label{def:SolutionUk}
        Let $2 \leq k \leq n$. Then a predictable $H$-valued stochastic process $u_k = u_k(t) \;, t\geq 0$ is a solution of the problem \eqref{eq:uk} $($i.e. \eqref{eq:StochasticSystem}$)$ if almost surely it satisfies the following integral equation
    \begin{equation*}
             u_k(t) = \int_0^t e^{(t-s)A} \nabla F (\phi(s)) [u_k(s)] ds + \int_0^t \Phi_k(s) ds , \qquad t \geq 0 \; , \; 2 \leq k \leq n,
   \end{equation*}
   with $\phi$ as in Proposition $\ref{prop:MildDeterministica}$ and  $\Phi_k$ as in \eqref{eq:phik}
   and \eqref{eq:Phi_k}.
    \end{definition}

In the following result we estimate the norm of $\Phi_k$ in $H$ by
means of the norms of
 the G\^ateaux derivatives of $F$ and the norms of $v_j(t)$, $j=1,\dots,k-1$,  where $v_j(t)$ are $H$-valued stochastic processes.
 \begin{lemma} \label{lemma:phik}
Let us fix $2\leq k\leq n;$ let $w(t)$ and $v_1(t),\ldots,v_{k-1}(t)$ be respectively a $K$-valued process and $H$-valued stochastic processes.
Then $\Phi_k( w(t))\left[v_1(t),\ldots,v_{k-1}(t)\right]$ as in \eqref{eq:phik} satisfies the following inequality
 \begin{equation*}
 \left|\Phi_k( w(t))\left[v_1(t),\ldots,v_{k-1}(t)\right]\right|_H \leq C |w(t)|_K k^2 (k+|v_1(t)|_H^{k-1}+\dots+|v_{k-1}(t)|_H^{k-1}),
 \end{equation*}
 where $C$ is some positive constants depending on $k$ and the constant $\gamma_j$,  $j=2,\ldots,k$
introduced in Hypothesis $\ref{hp:A+F}$.
 \end{lemma}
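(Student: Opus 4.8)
The plan is to estimate $\Phi_k$ termwise, working directly from its definition \eqref{eq:phik}. First I would apply the triangle inequality to the finite double sum, so that it suffices to control each summand
\[
\left| \nabla^{(j)} F(w(t))[v_{i_1}(t),\dots,v_{i_j}(t)] \right|_H,
\qquad 2\le j\le k,\ \ i_1+\dots+i_j=k,\ \ i_\ell\ge 1.
\]
For each such term I would use that $\nabla^{(j)}F(w(t))$ is a bounded $j$-linear operator to write
\[
\left| \nabla^{(j)} F(w(t))[v_{i_1}(t),\dots,v_{i_j}(t)] \right|_H
\le \left\| \nabla^{(j)} F(w(t)) \right\|_{L^j(H)} \prod_{\ell=1}^j \left| v_{i_\ell}(t) \right|_H .
\]
The only subtlety at this stage is the passage from the operator-norm bound available on the $B$-part $F^{(j)}_B$ to the action on the $H$-valued arguments $v_{i_\ell}$; this is handled through the continuous embeddings $K\hookrightarrow B\hookrightarrow H$ fixed in Section \ref{sec:1}.

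The substantive input is then Hypothesis \ref{hp:A+F}(iii)(b), which supplies the polynomial bound $\left\| \nabla^{(j)}F(w(t)) \right\|_{L^j(H)} \le \gamma_j\bigl(1+|w(t)|_K^{\,2n+1-j}\bigr)$; since $w(t)$ is $K$-valued, this is absorbed into a factor proportional to $|w(t)|_K$ with a constant depending on the $\gamma_j$, which accounts for the $|w(t)|_K$ appearing in the statement. To reach the displayed right-hand side I would next replace the mixed product $\prod_{\ell=1}^j |v_{i_\ell}(t)|_H$ of at most $k$ factors by a sum of pure powers, using Young's inequality (equivalently AM--GM) padded with factors equal to $1$, in the form $a_{i_1}\cdots a_{i_j}\le C_k\bigl(1+\sum_m a_m^{\,k-1}\bigr)$. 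This is the point where the bookkeeping of degrees enters and produces the homogeneous expression $k+|v_1(t)|_H^{k-1}+\dots+|v_{k-1}(t)|_H^{k-1}$.

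Finally I would collect the combinatorial factors. For fixed $j$ the admissible multi-indices $(i_1,\dots,i_j)$ with $i_1+\dots+i_j=k$ are exactly the compositions of $k$ into $j$ positive parts, of which there are $\binom{k-1}{j-1}$; summing over $2\le j\le k$ and over the at most $k-1$ processes $v_m$ yields an overall prefactor controlled by $C k^2$, with the remaining $k$-dependence (including the purely combinatorial growth) folded into the constant $C$, which is permitted to depend on $k$ and on $\gamma_2,\dots,\gamma_k$. Assembling the three estimates gives the claimed inequality.

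I expect the main obstacle to lie in the degree bookkeeping of the middle paragraph: the top-order summand $\nabla^{(k)}F(w(t))[v_1(t),\dots,v_1(t)]$ is genuinely of degree $k$ in $|v_1(t)|_H$, so matching it against the stated $(k-1)$-st powers requires care and, in the intended application (where $w=\phi$ and $v_i=u_i$), uses that $\phi$ is bounded in the $K$-norm by Proposition \ref{prop:MildDeterministica} and that the $u_i$ are controlled in the relevant norms. Once the reduction of the mixed products to pure powers and the counting of compositions are organized, the remaining steps are routine applications of multilinearity together with the growth estimate in Hypothesis \ref{hp:A+F}.
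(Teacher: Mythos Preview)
Your plan coincides with the paper's own argument step for step: triangle inequality on the double sum, the multilinear operator bound $\bigl|\nabla^{(j)}F(w)[v_{i_1},\dots,v_{i_j}]\bigr|_H\le\|\nabla^{(j)}F(w)\|_{L^j(H)}\prod_\ell|v_{i_\ell}|_H$, the growth estimate from Hypothesis~\ref{hp:A+F}(iii)(b), the AM--GM replacement $\prod_{\ell=1}^j a_{i_\ell}\le\sum_{\ell=1}^j a_{i_\ell}^{\,j}$, and a crude count of the compositions feeding into the factor $k^2$. Your bookkeeping is in fact slightly more honest than the paper's (you count compositions by $\binom{k-1}{j-1}$ and then absorb the growth into $C$, whereas the paper tacitly bounds their number by $k$).

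The obstacle you isolate is real, and it is not resolved in the paper either. For $j=k$ the only admissible multi-index is $(1,\dots,1)$, and the resulting summand contributes $|v_1(t)|_H^{\,k}$, not $|v_1(t)|_H^{\,k-1}$; the paper's inequality $\sum_{\ell=1}^j|v_{i_\ell}|_H^{\,j}\le j+\sum_{l=1}^{k-1}|v_l|_H^{\,k-1}$ silently uses $a^j\le 1+a^{k-1}$, which fails precisely at $j=k$. The clean fix is to replace the exponent $k-1$ by $k$ in the conclusion (the argument then goes through verbatim, since $a^j\le 1+a^{k}$ for all $2\le j\le k$). This is harmless for every downstream use of the lemma: in the proof of Theorem~\ref{thm:uk} one only needs $\EE\sup_{t\le T}|\Phi_k(t)|_H^{2a}<\infty$, which follows from the moment bounds \eqref{stimauj} on $u_1,\dots,u_{k-1}$ regardless of whether the exponent is $k-1$ or $k$. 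So treat the stated exponent as a typo and proceed; your strategy is exactly the intended one.
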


\begin{proof}
%Let $v_0(t),v_1(t),\ldots,v_{k-1}(t)$ be  $H$-valued stochastic processes, then:
We have
 \begin{equation}
 \begin{aligned}
\left| \Phi_k( w(t))\left[v_1(t),\ldots,v_{k-1}(t)\right] \right|_H &= \left| \sum_{j=2}^k \sum_{i_1+\dots+i_j =k} \frac{\nabla^{(j)}F(w(t)) [v_{i_1}(t),\dots,v_{i_j}(t)]}{j!}
 \right|_H \\
 &\leq \sum_{j=2}^k \sum_{i_1+\dots+i_j =k} \left| \frac{\nabla^{(j)}F(w(t))[v_{i_1}(t),\dots,v_{i_j}(t)]}{j!}  \right|_H
 \end{aligned}
 \end{equation}
 and using the assumption (3) in Hypothesis \ref{hp:A+F}, we get
  \begin{equation}
 \begin{aligned}
 | \Phi_k (t)|_H & \leq
  \sum_{j=2}^k \sum_{i_1+\dots+i_j =k}
  \frac{1}{j!}\| \nabla F^{(j)}(w(t))\|_{L^j(H) }
  \prod_{l=1}^j |v_{i_l}(t)|_H \\
  & \leq
   \sum_{j=2}^k  \frac{1}{j!} \gamma_j(1+ |w(t)|_K)^{m-j}
   \sum_{i_1+\dots+i_j =k}
   \sum_{l=1}^j |v_{i_l}(t)|_H^j \\
   & \leq   \sum_{j=2}^k  \frac{1}{j!} \gamma_j(1+ |w(t)|_K)^{m-j}
   \sum_{i_1+\dots+i_j =k}
    \left( j + \sum_{l=1}^{k-1} |v_l(t)|_H^{k-1} \right)\\
    & \leq
      \sum_{j=2}^k  \frac{1}{j!} \gamma_j(1+ |w(t)|_K)^{m-j}
    k^2
    \left(k + \sum_{l=1}^{k-1} |v_l(t)|_H^{k-1} \right) \\
    & \leq
    C (1+|w(t)|_K^{m-2})  k^2 \left(k + \sum_{l=1}^{k-1} |v_l(t)|_H^{k-1} \right),
 \end{aligned}
 \end{equation}
   for some positive constant $C$, from which the assertion in Lemma \ref{lemma:phik} follows.
\end{proof}
\begin{remark}\rm
Notice that by Lemma \ref{lemma:phik}, if $v_1,\ldots,v_{k-1}$
are $p$-mean ($p \in [2, \infty)$), integrable continuous stochastic processes
then the same holds for $\Phi_k$.
\end{remark}

\section{Main results} \label{sec:3}
\begin{proposition}\label{prop:u1}
    Under Hypothesis $\ref{hp:A+F}$ the following stochastic differential equation$:$
          \begin{equation}\label{eq:u1mr}
         \begin{cases}
         du_1(t)= [Au_1(t)+\nabla F(\phi(t)) [u_1(t)]]dt +  \sqrt{Q}dL(t) , \quad t \in [0,+\infty) \\
         u_1(0) = 0,
    \end{cases}
        \end{equation}
        has, with $\phi$ as in Proposition $\ref{prop:MildDeterministica}$, a unique mild solution satisfying, for any $p\geq2$, the following estimate$:$
        \begin{equation}\label{stimau1}
        \EE\left[ \sup_{t \in [0,T]} | u_1(t) |_H^p  \right]< +\infty, \qquad \textit{for any } T>0.
        \end{equation}
\end{proposition}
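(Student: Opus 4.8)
The plan is to view \eqref{eq:u1mr} as a linear equation with additive noise and a \emph{bounded, time-dependent} drift, so that once the drift operator is controlled along $\phi$ both existence--uniqueness and the moment bound \eqref{stimau1} become routine. By definition a mild solution satisfies
\begin{equation*}
u_1(t)=\int_0^t e^{(t-s)A}\,\nabla F(\phi(s))[u_1(s)]\,\dd s + L_A(t),\qquad t\in[0,T],
\end{equation*}
where $L_A(t)=\int_0^t e^{(t-s)A}\sqrt{Q}\,\dd L(s)$ is the stochastic convolution of Definition \ref{def:stoc-conv}. I would first obtain existence and uniqueness in $\mathcal{C}_{\mathcal{F}}([0,T];L^p(\Omega;H))$ by a contraction argument for the affine map
\begin{equation*}
(\Gamma u)(t)=\int_0^t e^{(t-s)A}\,\nabla F(\phi(s))[u(s)]\,\dd s + L_A(t).
\end{equation*}
Since $L_A$ belongs to this space by Hypothesis \ref{prop:StochasticConvolution} and the continuous embedding $K\hookrightarrow H$, the only point is that the linear Volterra part is a contraction after iterating $\Gamma$ (equivalently, in an exponentially weighted norm $\sup_t e^{-\lambda t}(\cdot)$), and this reduces to a bound on the operator norm of $\nabla F(\phi(s))$.

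The decisive preliminary estimate is the uniform boundedness of the drift along the deterministic trajectory. Although $F$ is not globally Lipschitz, so that $\nabla F$ is a priori unbounded, its evaluation at $\phi(s)$ is tamed: by the Remark following Proposition \ref{prop:MildDeterministica} one has $\sup_{s\in[0,T]}|\phi(s)|_K<\infty$, while the growth bound of Hypothesis \ref{hp:A+F}(3), interpreted as an operator bound on $H$ as in Lemma \ref{lemma:phik}, gives $\|\nabla F(\phi(s))\|_{L(H)}\le\gamma_1\big(1+|\phi(s)|_K^{2n}\big)$. Hence
\begin{equation*}
M:=\sup_{s\in[0,T]}\|\nabla F(\phi(s))\|_{L(H)}<\infty.
\end{equation*}
I expect this to be the real obstacle of the proof: one must guarantee that the derivative of the polynomially growing nonlinearity, read as a bounded operator on the base space $H$, is controlled by the \emph{strong} ($K$-)norm of $\phi$, which is precisely why the scale $K\hookrightarrow B\hookrightarrow H$ and assumption (3) of Hypothesis \ref{hp:A+F} were introduced.

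Granting $M<\infty$, estimate \eqref{stimau1} follows pathwise, with no need for It\^o's formula. Using $\|e^{(t-s)A}\|_{L(H)}\le e^{-\omega(t-s)}\le1$ I obtain, $\PP$-a.s. for every $t\in[0,T]$,
\begin{equation*}
|u_1(t)|_H\le M\int_0^t|u_1(s)|_H\,\dd s + \sup_{r\in[0,T]}|L_A(r)|_H,
\end{equation*}
so that Gronwall's lemma yields $\displaystyle\sup_{t\in[0,T]}|u_1(t)|_H\le e^{MT}\sup_{r\in[0,T]}|L_A(r)|_H$. Raising to the power $p$ and taking expectations gives
\begin{equation*}
\EE\Big[\sup_{t\in[0,T]}|u_1(t)|_H^p\Big]\le e^{pMT}\,\EE\Big[\sup_{r\in[0,T]}|L_A(r)|_H^p\Big],
\end{equation*}
and the right-hand side is finite because $K\hookrightarrow H$ continuously and Hypothesis \ref{prop:StochasticConvolution} provides $\EE\big[\sup_{r\in[0,T]}|L_A(r)|_K^p\big]\le C_T<\infty$. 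This establishes \eqref{stimau1} for all $p\ge2$. As a side remark, differentiating the quasi-$m$-dissipativity inequality gives $\langle\nabla F(\phi)h,h\rangle\le\eta|h|_H^2$, hence $\langle(A+\nabla F(\phi))h,h\rangle\le-(\omega-\eta)|h|_H^2$ by Hypothesis \ref{hp:A+F}(4); working with $v_1:=u_1-L_A$ and this strict dissipativity even produces a bound uniform in $T$, although only finiteness is needed here.
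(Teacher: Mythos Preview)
Your existence--uniqueness argument via the contraction map $\Gamma$ is essentially identical to the paper's: the paper also defines $\Gamma(w)(t)=\int_0^t e^{(t-s)A}\nabla F(\phi(s))[w(s)]\,\dd s + L_A(t)$, bounds $\|\nabla F(\phi(s))\|_{L(H)}$ by $\gamma_1(1+|\phi(s)|_K^{2n})$ using Hypothesis~\ref{hp:A+F}(3) and the $K$-boundedness of $\phi$, and shows $\Gamma$ is a contraction on short time intervals which are then concatenated. (The paper treats uniqueness separately via It\^o's formula on the difference, but this is redundant once the contraction is established.)

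The moment bound \eqref{stimau1} is where you take a genuinely different, and more elementary, route. The paper applies It\^o's formula to $|u_1|_H^{2a}$, uses the dissipativity of $A+\nabla F(\phi)$, and controls the resulting martingale term via the Burkholder--Davis--Gundy inequality together with the moment condition on $\nu$ in Hypothesis~\ref{hp:A+F}(v). You instead stay at the level of the mild formulation and run a pathwise Gronwall argument, reducing everything to the moment bound on $\sup_t|L_A(t)|_K$ supplied by Hypothesis~\ref{prop:StochasticConvolution}. Your approach is cleaner and avoids the infinite-dimensional It\^o formula (which the paper itself flags as formal and in need of an approximation argument). The trade-off is that you import Hypothesis~\ref{prop:StochasticConvolution}, whereas the paper's energy method formally uses only Hypothesis~\ref{hp:A+F}; also, the paper's dissipativity-based estimate can in principle be organised to give constants that do not blow up with $T$ (as you note in your closing remark), while your bound carries the factor $e^{pMT}$. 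Since the proposition only claims finiteness on each $[0,T]$, your argument is entirely sufficient.
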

    \begin{proof}
     First we show the uniqueness. Let us suppose that $w_1(t)$ and $w_2(t)$
     are two solutions of \eqref{eq:u1mr}. Then by It\^o's formula we have:
     \begin{multline*}
        d |w_1(t)-w_2(t)|_H^2= \left\langle A
         (w_1(t)-w_2(t)),w_1(t)-w_2(t)\right\rangle d t
        \\ + \left\langle \nabla F(\phi(t))[w_1(t)-w_2(t)],
          w_1(t)-w_2(t)\right\rangle d t,
     \end{multline*}
     so that, by the dissipativity condition on $A$ and the estimate
     on $\nabla F$ in Hypothesis \ref{hp:A+F}, (3), we have
     \begin{align*}
        d |w_1(t)-w_2(t)|_H^2 \leq - \omega |w_1(t)-w_2(t)|_H^2 + \gamma_1
        (1+|\phi|_K^{m-1}) |w_1(t)-w_2(t)|_H^2.
     \end{align*}
     Now uniqueness follows by applying Gronwall's lemma.

     As far as the existence is concerned, we proceed by a fixed point argument.
     % For any $p\in [2,\infty)$ and $T>0$, we shall denote by $\Xi_{p,T}$ the space of all (equivalence classes) of predictable $H$-valued processes $v(t)$, $t\geq 0$, such that
    %\begin{equation*}
    %    \|v(t)\|_{p,T}=\left(\EE \sup_{t \in [0,T]} |v(t)|_H^p\right)^{1/p} < \infty.
    %    \end{equation*}
     We introduce the mapping $\Gamma$
      from $\mathcal{L}^p(\Omega;C([0,T];H))$ into itself defined by
      \begin{align*}
        \Gamma(w(t)):= \int_0^t e^{(t-s)A}\nabla F(\phi(s)))[w(s)]d s
        + L_A(t).
        \end{align*}
        We are going to prove that there exists $\tilde{T}>0$ such that $\Gamma$ is a contraction on $\mathcal{L}^p(\Omega;C([0,\tilde{T}];H))$. In fact, for any $v,w \in
       \mathcal{L}^p(\Omega;C([0,\tilde{T}];H))$ we have, for any $0\leq t \leq \tilde{T}$:
        \begin{align*}
           & \| \Gamma(v(t))-\Gamma(w(t))\|^p =
             \EE\left[ \sup_{t\in [0,\tilde{T}]} \left|\int_0^t
             e^{(t-s)A}\nabla F(\phi(s)))[v(s)-w(s)]d s \right|^p_H \right]
             \\
            & \leq  \EE\left[ \sup_{t\in[0,\tilde{T}]}  \int_0^t   \| e^{(t-s)A}\|_{L(H)}^p \left| \nabla F(\phi(s))  [v(s)-w(s)]  \right|_H^p ds\right]\\
          & \leq \EE \left[\sup_{s\in [0,\tilde{T}]} \left| \nabla F(\phi(s))  [v(s)-w(s)]  \right|_H^p  \right] \int_0^{\tilde{T}} \|  e^{(\tilde{T}-s^\prime)A}\|_{L(H)}^p ds^\prime\\
          %& \leq
          % \EE \sup_{[0,\tilde{T}]} \int_0^t e^{-p\omega (t-s)}
          %\left( \| \nabla \|_{\mathcal{L}(H,\mathcal{L}(H,H))}\right)^p
          %|\phi|_H^p |x(s)-y(s) |_H^p ds \\
          & \leq
          \EE\left[ \sup_{s\in [0,\tilde{T}]} |v(s) -w(s) |_H^p \right]\gamma_1^p\left(1+|\phi(s)|_{K}^{m-1}\right)^p
          \frac{1}{\omega p} \left(1- e^{-\omega p \tilde{T}}\right) \\
          & \leq
          \gamma_1^p(1+|u^0|_K^{m-1})^p
          \| v-w \|^p \frac{1}{\omega p} \left(1- e^{-\omega p \tilde{T}}\right),
        \end{align*}
     where we used
condition $(iii)$ in Hypothesis \ref{hp:A+F} for the third
inequality and Proposition \ref{prop:MildDeterministica} for the
last inequality.
        Then if $\tilde{T}$ is sufficiently small (depending on $\omega, p, \gamma_1, \phi$), we see that $\Gamma$  is a contraction on $\mathcal{L}^p(\Omega;C([0,\tilde{T}];H))$.

        By considering the map $\Gamma$ on intervals $[0,\tilde{T}], [\tilde{T},2\tilde{T}], \ldots, [(N-1)\tilde{T},T]$, $\tilde{T} \equiv T/N$, $N \in \mathbb{N}$,
        we have that $\Gamma$ is a contraction on $\mathcal{L}^p(\Omega;C([0,T];H))$ and hence we have the existence and uniqueness of the solution for the equation \eqref{eq:u1mr} in the space $\mathcal{L}^p(\Omega;C([0,T];H))$ for any $p \in [2,\infty)$.

    Let us now consider the estimate \eqref{stimau1}. We write the It\^o formula for the function
$|\cdot|_H^{2a}$ H, applied to the process X. To this end, we recall
the expressions for the first and second derivatives of the function
$H(x):=|x|^{2a}, a \,\in\,\N$.

We have
\begin{align*}
    &\nabla F(x)= 2a |x|^{2(a-1)}x\\
  & \frac{1}{2}{\rm Tr}(Q\nabla F^2(x) )= a {\rm Tr}(Q) |x|^{2(a-1)} + (a-1)a |x|^{2(a-2)} |\sqrt{Q}x|^2.
\end{align*}

Moreover (see, \cite{BoMZ} and \cite{OZS}), we recall that It\^o
formula implies:
\begin{align*}
   \dd F(u(t))= \nabla F(u(t-)) \dd u(t) + \frac{1}{2}{\rm Tr}(Q\nabla F^2(u(t-)) ) \dd u
       + \dd [u](t)
\end{align*}
Although our computations are only formal, they can be justified
using an  approximation argument.
    By condition (iii) in Hypothesis \ref{hp:A+F} we have for all points in the probability space and $p=2a$ with $a \in \mathbb{N}$:
\begin{multline}\label{eq:stimau1d2}
    \dd |u_1(t)|^{2a} = 2a \langle u_1(t-),\dd u_1(t)\rangle_H |u_1(t)|^{2a-2} +
     a {\rm Tr}(Q) |u_1(t-)|^{2(a-1)} \dd t\\+ (a-1)a |u_1(t-)|^{2(a-2)} |\sqrt{Q}u_1(t)|^2\dd t+ \dd [u_1(t)](t).
\end{multline}
By the dissipativity of $A+F$, the first term in the above inequality is estimated by
  \begin{equation}\label{sbadiglio1}
  \begin{aligned}
& \langle u_1(t-),\dd u_1(t)\rangle_H |u_1(t)|^{2a-2}
  \\= &\ \langle A u_1(t), u_1(t) \rangle |u_1(t)|_H^{2a-2} +
      \langle \nabla F(\phi(t))[u_1(t)], u_1(t) \rangle |u_1(t)|_H^{2a-2}
     +  \langle\sqrt{Q} \dd L(t-), u_1(t) \rangle |u_1(t)|_H^{2a-2} %+
%    2a {\rm Tr\, Q} |u_1(t)|_H^{2a-2}\dd |L|(s) \rangle |u_1(t)|_H^{2a-2}   \\
%     &
\\\leq
   &  - \omega
     |u_1(t)|_H^{2a} + 2a \gamma(1+|u^0|_K^{m-1})|u_1(t)|_H^{2a}+2a  \langle\sqrt{Q} \dd L(t), u_1(t)\rangle |u_1(t)|_H^{2a-1}
%   &+ 2a {\rm Tr\, Q} |u_1(t)|_H^{2a-2}\dd |L|(s) \rangle |u_1(t)|_H^{2a-2}  \\
\\      \leq &
     - \tilde{\omega}
     |u_1(t)|_H^{2a} +  \langle \sqrt{Q} L(t), u_1(t)\rangle |u_1(t)|_H^{2a-1},
%+2a {\rm Tr\, Q} |u_1(t)|_H^{2a-2}\dd |L|(s) \rangle |u_1(t)|_H^{2a-2},
   \end{aligned}
   \end{equation}
where $\tilde{\omega}:=\omega-\gamma(1+|u^0|_H)$.

Moreover, the second and third term in \eqref{eq:stimau1d2} can be estimated in the following way:
\begin{align}\label{eq:d2}
       a {\rm Tr}(Q) |u_1(t)|^{2(a-1)} + (a-1)a |u_1(t-)|^{2(a-2)} |\sqrt{Q}u_1(t)|^2
    \leq  C_{a} (\epsilon{\rm Tr }^{2a}(Q)+ \frac{1}{\epsilon}|u_1(t)|^{2a}
    ),
\end{align}
for any $\epsilon\,>0,$ where we used the elementary inequality $a
b^{2(a-1)} \leq C_a (\epsilon a^{2a}+\frac{1}{\epsilon} b^{2a})$,
with $C_a$ being a suitable positive constant. Therefore
\begin{align*}
    %\EE \sup_{t\leq T}
   |u_1(t)|_H^{2a}&  \leq  -2a \left(\tilde{\omega} - \frac{C_a}{\epsilon}\right)
    \int_0^t  |u_1(s)|_H^{2a}\dd \, s + 2a  \int_0^t \langle \sqrt{Q} \dd L(s), u_1(s)\rangle |u_1(s)|_H^{2a-1}\\
   & +C_a \epsilon {\rm Tr }(Q)^{2a}\,T + \int_0^t  {\rm Tr\, Q} \dd |L|(s)
\end{align*}
  and
\begin{multline*}
    \EE \sup_{t\leq T}
   |u_1(t)|_H^{2a}  \leq -(2a \tilde{\omega}  T-\frac{C_a}{\epsilon})
   \EE \sup_{t\leq T}  |u_1(t)|_H^{2a}\\ + 2a \,  \EE \sup_{t\leq T}  \left|\int_0^t \langle \sqrt{Q} \dd L(s), u_1(s)\rangle |u_1(s)|_H^{2a-1} \dd s\right| +  C_{a}\epsilon T + T\int_H {\rm Tr Q} |x|^{2}\nu(\dd x),
\end{multline*}
where we used the relation
\begin{align}\label{eq:dovesitrova}
   \EE \sup_{t\leq T }[u_1](t) \leq \EE \int_0^T {\rm Tr}(Q) \dd [L](t) = \EE \int_0^T  {\rm Tr}(Q)
       \dd \langle L\rangle (t) = T \int_H {\rm Tr}(Q) |x|^2\nu(\dd x).
\end{align}
By the Burkholder-Davis-Gundy inequality, see.e.g,  (\cite{PeZa}, p.
37, \cite{HABU, KALLO}) applied to
\begin{align*}
     M(t):= \int_0^t \langle \sqrt{Q} \dd L(s), u_1(s)\rangle |u_1(s)|_H^{2a-1},
\end{align*}
there exists a constant $c_1$ such that
\begin{align*}
     \EE \sup_{t\leq T}  \left|\int_0^t \langle \sqrt{Q} \dd L(s), u_1(s-)\rangle |u_1(s)|_H^{2a-1} \right| \leq
  & \,c_1  \, \EE \left(\left[ \int_0^{\cdot} \langle \sqrt{Q} \dd L(s), u_1(s-)\rangle |u_1(s)|_H^{2a-1} \right](T)\right)^{1/2} \\
    \leq & \,c_1 \, \EE \left( \sup_{t\leq T} |u_1(t)|^{2a} \int_0^T {\rm Tr}(Q) \dd [L](s)\right)^{1/2}\\
     \leq &\, \ c_1 \, \epsilon \EE \sup_{t\leq T}
   |u_1(t)|_H^{2a} + \frac{c_1 T}{4\epsilon} \int_H {\rm Tr Q} |x|^{2}\nu(\dd x),
\end{align*}
where we used the elementary inequality $ab\leq \epsilon a^2 +
(1/4\epsilon)b^2, \epsilon\,> 0.$ Collecting the above estimates we
obtain
\begin{align*}
     \EE \sup_{t\leq T}
   |u_1(t)|_H^{2a}  &\leq -(2a \tilde{\omega} -\frac{c_a}{\epsilon})T
   \EE \sup_{t\leq T}  |u_1(t)|_H^{2a} \\
    &+ 2c_1 \epsilon \EE \sup_{t\leq T}
   |u_1(t)|_H^{2a} + \left( \frac{c_1}{2\epsilon}+1\right)T  \int_H {\rm Tr Q} |x|^{2}\mu(\dd x) + C_a \epsilon T,
\end{align*}
%   By Hypothesis \ref{prop:StochasticConvolution} we have that:
%   $$
%    \EE\left[  \sup_{t \in [0,T]} | L_A(t) |_H^{2a}\right]  \leq C_a^\prime, \qquad T >0
%   $$
%   (first with $K$ replacing $H$, but then with $H$, due to the assumption on $H,K$) $C_{a}^\prime$ is some positive constant.
%    Integrating on $[0,T]$ both sides in \eqref{sbadiglio1}, taking the expectation of both members in the  inequality and applying Gronwall's lemma to \eqref{sbadiglio1} we obtain:
  Hence
 $$
     \EE \left[ \sup_{t \in [0,T]} |u_1(t)|_H^{2a} \right] \leq C_{a,T}^\prime  e^{-2\, a\,(\tilde{\omega} -c_a/\epsilon)T} < C_{a,T} \:,
   $$
   where $C_{a,T}$ is a positive constant  and \eqref{stimau1} follows.
     %The proof is a slight modification of that given in \cite[Theorem 7.4]{DPZRosso}, in particular we gain the case $p=2$ by using the assumed $m-$dissipativity of $F$.  The proof of estimate \eqref{stimau1} is analogous to the one given in proposition \ref{prop:MildStoc}.
    \end{proof}

    \begin{theorem}\label{thm:uk}
     Let us fix $2\leq k\leq n$, assume that  Hypothesis \ref{hp:A+F} holds, and let $u_1$ be the solution of the problem \eqref{eq:u1}.
  Suppose moreover that   $u_j$ is the unique mild solution of the following Abstract Cauchy Problem (ACP):
      \begin{equation}\label{eq:prb-j}
    \begin{cases}
            du_j(t)= [Au_j(t)+\nabla F(\phi(t))[u_j(t)]]dt,
               + \Phi_j(t)dt \notag \\
               u_j(0) = 0
             \tag{$\rm{ACP}_j$}
\end{cases}
        \end{equation}
       for $j=2,\ldots,k-1$ satisfying$:$
    \begin{equation}\label{stimauj}
        \EE \left[ \sup_{t \in [0,T]} | u_j(t) |_H^p \right]< +\infty, \qquad T>0, \ for \ any \ p\in [2,\infty);
    \end{equation}
        then there exists a unique mild solution $u_k(t)$ of the following non-homogeneous linear differential equation with stochastic coefficients
        $($in the sense of Definition $\ref{def:SolutionUk} ):$
      \begin{equation}\label{eq:prb-k}
   \begin{cases}
            du_k(t)= [Au_k(t)+\nabla F(\phi(t))[u_k(t)]]dt
                + \Phi_k(t)dt, \quad t\in[0,+\infty), \notag \\
                u_k(0) = 0
                \tag{$\rm{ACP}_k$}
\end{cases}
        \end{equation}
        and it satisfies the following estimate, for any $T >0 :$
       \begin{equation}\label{stimauk}
         \EE\left[\sup_{t\in[0,T]} | u_k(t) |_H^p\right] < +\infty .
       \end{equation}
    \end{theorem}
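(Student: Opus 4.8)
The plan is to treat \eqref{eq:prb-k} as a linear non-autonomous equation for $u_k$ with time-dependent drift $A+\nabla F(\phi(t))$ and a now-known forcing term $\Phi_k(t)=\Phi_k(\phi(t))[u_1(t),\ldots,u_{k-1}(t)]$, and to reproduce the fixed-point scheme already carried out for $u_1$ in Proposition \ref{prop:u1}. The argument is really an induction on $k$: the base case $k=1$ is Proposition \ref{prop:u1}, and for $2\le k\le n$ we may assume that $u_1,\ldots,u_{k-1}$ exist, are unique, and satisfy the moment bounds \eqref{stimauj}. The only genuinely new ingredient relative to the proof of Proposition \ref{prop:u1} is that the stochastic convolution $L_A$ appearing in \eqref{eq:u1mr} is here replaced by the deterministic-in-structure inhomogeneity $\Phi_k$, so the main point is to control $\Phi_k$.

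First I would show that $\Phi_k\in\mathcal{L}^p(\Omega;C([0,T];H))$ for every $p\in[2,\infty)$. By Lemma \ref{lemma:phik},
$$
|\Phi_k(t)|_H \le C\,|\phi(t)|_K\,k^2\Bigl(k+\sum_{l=1}^{k-1}|u_l(t)|_H^{k-1}\Bigr),
$$
so raising to the $p$-th power, taking the supremum over $[0,T]$ and then the expectation, the right-hand side is finite: $\phi$ is bounded in the $K$-norm by the $K$-continuous version provided in the Remark after Proposition \ref{prop:MildDeterministica}, and by the inductive hypothesis \eqref{stimauj} each $u_l$, $l=1,\ldots,k-1$, has finite $p$-moments for every $p\in[2,\infty)$, so in particular $\EE[\sup_{t\le T}|u_l(t)|_H^{(k-1)p}]<\infty$. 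Hence $\Phi_k$ is an adapted, $p$-mean continuous $H$-valued process, as already noted in the Remark following Lemma \ref{lemma:phik}.

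Next I would run the contraction argument. On $\mathcal{L}^p(\Omega;C([0,\tilde T];H))$ define
$$
\Gamma(w)(t):=\int_0^t e^{(t-s)A}\nabla F(\phi(s))[w(s)]\,ds+\int_0^t e^{(t-s)A}\Phi_k(s)\,ds;
$$
the second summand lies in the space by the previous step, so $\Gamma$ maps the space into itself. In the difference $\Gamma(v)-\Gamma(w)$ the $\Phi_k$-term cancels, so we are left with exactly the linear estimate of Proposition \ref{prop:u1}: using $\|e^{(t-s)A}\|_{L(H)}\le e^{-\omega(t-s)}$, condition (iii) of Hypothesis \ref{hp:A+F}, and the uniform bound on $|\phi|_K$, one obtains $\|\Gamma(v)-\Gamma(w)\|^p\le \gamma_1^p(1+|u^0|_K^{m-1})^p\,\tfrac{1}{\omega p}(1-e^{-\omega p\tilde T})\,\|v-w\|^p$, which is a strict contraction for $\tilde T$ small. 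Patching the unique fixed points over $[0,\tilde T],[\tilde T,2\tilde T],\ldots,[(N-1)\tilde T,T]$ with $\tilde T=T/N$ yields a unique mild solution $u_k$ on $[0,T]$ in the sense of Definition \ref{def:SolutionUk}; global uniqueness follows as in Proposition \ref{prop:u1}, since the difference of two solutions solves the homogeneous equation and Gronwall's lemma forces it to vanish.

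Finally, for \eqref{stimauk} I would bound the mild formula directly: from
$$
|u_k(t)|_H\le \int_0^t e^{-\omega(t-s)}|\nabla F(\phi(s))[u_k(s)]|_H\,ds+\int_0^t e^{-\omega(t-s)}|\Phi_k(s)|_H\,ds,
$$
using the polynomial growth of $\nabla F$ together with the uniform $K$-bound on $\phi$, Gronwall's lemma gives $\EE[\sup_{t\le T}|u_k(t)|_H^p]\le C\,\EE[\sup_{t\le T}(\int_0^t e^{-\omega(t-s)}|\Phi_k(s)|_H\,ds)^p]<+\infty$, the finiteness coming from the integrability of $\Phi_k$ established above. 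The main obstacle is precisely that first step: once the polynomial growth of the Fréchet derivatives is absorbed by the inductive moment bounds \eqref{stimauj} and the boundedness of $\phi$ in $K$, so that $\Phi_k$ has finite moments of every order, everything else is a routine repetition of the linear theory already developed for $u_1$.
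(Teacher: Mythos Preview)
Your proposal is correct and follows essentially the same strategy as the paper: both set up the same contraction map $\Gamma$ on $\mathcal{L}^p(\Omega;C([0,\tilde T];H))$, invoke the computations already done in Proposition~\ref{prop:u1} for the Lipschitz estimate (the $\Phi_k$-term cancelling in $\Gamma(v)-\Gamma(w)$), patch over subintervals, and then use Lemma~\ref{lemma:phik} together with the inductive moment bounds \eqref{stimauj} and the $K$-boundedness of $\phi$ to control $\Phi_k$.

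The only methodological difference is in the derivation of the moment estimate \eqref{stimauk}. The paper argues via an energy inequality: it differentiates $|u_k(t)|_H^{2a}$, exploits the dissipativity of $A$ and the bound on $\nabla F(\phi(\cdot))$ to obtain $\tfrac{d}{dt}|u_k(t)|_H^{2a}\le -2a\tilde\omega|u_k(t)|_H^{2a}+C_a|\Phi_k(t)|_H^{2a}$, and then applies Gronwall. You instead work directly from the mild formula, bounding $|u_k(t)|_H$ by a convolution of $|\nabla F(\phi)[u_k]|_H$ and $|\Phi_k|_H$ against $e^{-\omega(t-s)}$ and applying Gronwall at the level of $|u_k|_H$ before taking $p$-th powers. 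Your route is arguably cleaner here, since for $k\ge2$ there is no stochastic integral and hence no need for the It\^o-type machinery that motivated the energy approach in Proposition~\ref{prop:u1}; the paper simply mirrors the latter computation. Both arrive at the same conclusion.
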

    \begin{proof}
      %The proof of existence and uniqueness on the interval $[0,T]$ for $T>0$ is a slight modification of that given in Proposition \ref{}.
      We proceed by a fixed point argument, where
      the contraction is given by
        \begin{equation*}
          \Gamma(y(t)):=\int_0^{t} e^{(t-s)A} \nabla F (\phi(t))  [y(t)]
           ds  + \int_0^t e^{(t-s)A} \Phi_k(s) ds
        \end{equation*}
        on $\mathcal{L}^p(\Omega;C([0,T];H))$.
        In fact, arguing as in Proposition \ref{prop:u1}, we see that for  $\tilde{T} \in [0,T]$ sufficiently small,
        $\Gamma$ is a contraction on $\mathcal{L}^p(\Omega;C([0,\tilde{T}];H))$, $p \in [2,\infty)$, so that the existence and the uniqueness of the solution
        for \eqref{eq:prb-k} follows.

Let us consider the estimate \eqref{stimauk}.
    By the condition (iv) in Hypothesis \ref{hp:A+F} we have,  for $p=2a$ with $a \in \mathbb{N}$ (and all points in the probability space) :
  \begin{equation}\label{sbadiglio}
  \begin{aligned}
  \frac{d}{dt} |u_k(t)|_H^{2a}
  &= 2a \langle A u_k(t), u_k(t) \rangle |u_k(t)|_H^{2a-2} +
     2a \langle \nabla F(\phi(t))[u_k(t)], u_k(t) \rangle |u_k(t)|_H^{2a-2}\\
     &+ 2a \langle \Phi_k(t), u_k(t) \rangle |u_k(t)|_H^{2a-2} \\
     &\leq
     -2a \omega
     |u_k(t)|_H^{2a} + 2a \gamma(1+|u^0|_K)|u_k(t)|_H^{2a}+2a  |\Phi_k(t)|_H |u_k(t)|_H^{2a-1} \\
     & \leq
     -2a \tilde{\omega}
     |u_k(t)|_H^{2a} + C_a |\Phi_k(t)|_H^{2a},
   \end{aligned}
   \end{equation}
where $\tilde{\omega}:=\omega-\gamma(1+|u^0|_K)$ as in the proof of Proposition \eqref{prop:u1}.
   By the assumption  \eqref{stimauj} made on $u_j(t),j=1,\dots,k-1$ and Lemma \ref{lemma:phik} we have that:
   $$
    \EE\left[  \sup_{t \in [0,T]} | \Phi_k(t) |_H^{2a} \right]\leq C_a^\prime, \qquad T >0,
   $$
   so that taking the expectation of  inequality \eqref{sbadiglio} and applying Gronwall's lemma (similarly as in the proof of Proposition \ref{prop:u1}) we obtain:
   $$
     \EE\left[\sup_{t \in [0,T]} |u_k(t)|_H^{2a}\right] \leq C_a^\prime  e^{-2\,a\,\tilde{\omega} T} < C_a \:,
   $$
   where $C_a$ is a positive constant, and the theorem follows.

    %Using the (stochastic) variation of constants formula we have:
    %\begin{equation}
    %\begin{aligned}
    %    \EE  \left[ \sup_{t \in [0,T]} |u_k(t)|_H^p   \right] & \leq
    %    \EE  \left[ \sup_{t \in [0,T]} \left| \int_0^t e^{(t-s)A} \nabla F(\textbf{u}(s))[u_k(s)] ds \right|_H^p \right] +
    %     \EE  \left[ \sup_{t \in [0,T]} \left| \int_0^t e^{(t-s)A} \Phi_k(s) ds \right|_H^p \right] \\
    %     & \leq
    %     \frac{1}{\omega p} \left(\| \nabla F \|_{\mathcal{L}(H,\mathcal{L}(H,H))} \right)^p
    %     \EE\left[\sup_{t \in [0,T]} \int_0^t  \sup_{s \in [0,t]} |u_k(s)|_H^p ds \right] \\
    %     &+ C |\phi|_H^p k^{4p} \left( \int_0^{T} e^{-2 \omega (T-s)} ds \right) \left( 1+ \sum_{l=1}^{k-1} \EE\left( \sup_{t \in [0,T]} |u_l(t)|_H^{k-1} \right) \right) .
    %\end{aligned}
    %\end{equation}

    \end{proof}

    We are now able to state the main result of this section:
    \begin{theorem}\label{Th:espansione}
         Under Hypothesis $\ref{hp:A+F}$ the mild solution $u(t)$ of \eqref{eq:eps}
         $($in the sense of Definition $\ref{def:stoc-conv})$ can be expanded in powers of $\ep>0$ in the following form
         \begin{equation*}
        u(t)=\phi(t)+\ep u_1(t)+\dots+\ep^n u_n(t)+R_n(t,\ep), \quad n \in \mathbb{N},
    \end{equation*}
   where $u_1$ is the solution of
          \begin{align*}
                 du_1(t)&=[Au_1(t)+\nabla F(\phi(t))[u_1(t)]]dt+\sqrt{Q}dL(t)\\
                 u_1(0)&=0,
            \end{align*}
        while $u_k$, $k=2,\ldots,n$ is the solution of
        \begin{equation}
         \begin{cases}
            du_k(t)= [Au_k(t)+\nabla F(\phi(t))[u_k(t)]dt
               +\Phi_k(t)dt, \notag \\
               u_k(0)=0.
               \end{cases}\tag{$\rm{ACP}_k$}
        \end{equation}
        The remainder $R_n(t,\ep)$ is defined by
 \begin{equation}
    \begin{aligned}
      R_n(t,\ep) & :=  u(t)-\phi(t) - \sum_{k=1}^n \ep^k u_k(t) \\
      &= \int_0^t e^{(t-s)A}
      \left( F(u(s))-F(\phi(s))-\sum_{k=1}^n \ep^k \nabla F (\phi(s))[u_k(s)] - \sum_{k=2}^n \ep^k\Phi_k(s) \right) ds,
      \end{aligned}
    \end{equation}
 and verifies the following  inequality
          \begin{equation*}
\EE \left[\sup_{t\in[0,T]}\left|R_n(t,\ep)\right|_H^p\right] \leq C_p \ep^{n+1},
    \end{equation*}
    with a constant $C_p>0$.
    \end{theorem}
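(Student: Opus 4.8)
The plan is to start from the definition $R_n(t,\ep)=u(t)-\phi(t)-\sum_{k=1}^n\ep^k u_k(t)$ and subtract the mild formulations of \eqref{eq:eps}, of \eqref{eq:det}, and of the equations for $u_1,\dots,u_n$. First I would note that the stochastic convolution $\ep L_A(t)$ carried by $u$ and the convolution $\ep L_A(t)$ carried by $\ep u_1$ cancel exactly, while the deterministic terms $e^{tA}u^0$ cancel between $u$ and $\phi$. Hence $R_n$ has \emph{no} surviving It\^o integral and is given by the pathwise Bochner integral $R_n(t,\ep)=\int_0^t e^{(t-s)A}G(s)\,\dd s$ displayed in the statement, with $G(s)=F(u(s))-F(\phi(s))-\sum_{k=1}^n\ep^k\nabla F(\phi(s))[u_k(s)]-\sum_{k=2}^n\ep^k\Phi_k(s)$. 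This is the crucial structural simplification: the estimate reduces to a \emph{deterministic} (pathwise) Gronwall argument, the probabilistic content entering only through moment bounds on the data.

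Next, writing $u=\phi+h$ with $h=\sum_{k=1}^n\ep^k u_k+R_n$, I would insert the Taylor expansion \eqref{EpsilonTaylorF} of $F(\phi+h)$ around $\phi$. By the very way $u_1,\dots,u_n$ and the terms $\Phi_k$ were constructed (matching powers of $\ep$), all contributions of orders $\ep^1,\dots,\ep^n$ coincide with $\sum_{k=1}^n\ep^k\nabla F(\phi)[u_k]+\sum_{k=2}^n\ep^k\Phi_k$ and cancel against the subtracted terms in $G$. What survives is $G(s)=R_1^{(n)}(\phi(s);h(s),\ep)+\big(\text{terms carrying at least one factor }R_n\big)$: the first piece is controlled by Lemma \ref{lm:R1}, while the remaining terms split into a part linear in $R_n$, whose leading contribution is $\nabla F(\phi(s))[R_n(s)]$, plus cross terms $\nabla^{(j)}F(\phi(s))[\dots,R_n(s)]$ carrying a positive power of $\ep$, plus genuinely superlinear contributions bounded by $|R_n(s)|_H^{\ell}$, $2\le\ell\le n+1$ (the latter arising both from the multilinear terms and, through $|R^{(n)}(\phi;h)|_H\le C|h|_H^{n+1}$ in \eqref{eq:FrechetTaylorn}, from the Taylor remainder).

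Third, I would estimate $|G(s)|_H$ using $\|e^{(t-s)A}\|_{L(H)}\le e^{-\omega t}$ from Hypothesis \ref{hp:A+F}, the growth bounds on $\nabla^{(j)}F$ from condition (iii), Lemma \ref{lemma:phik}, and, crucially, that $\phi$ is bounded in the $K$-norm (Proposition \ref{prop:MildDeterministica} and the subsequent Remark), so that $\|\nabla^{(j)}F(\phi(s))\|_{L^j(K)}$ is a bounded process. This yields the pathwise inequality
\[
|R_n(t)|_H\le\int_0^t e^{-\omega(t-s)}\Big(C\ep^{n+1}\Xi(s)+C\,\Theta(s)\,|R_n(s)|_H+C\sum_{\ell=2}^{n+1}\Psi_\ell(s)\,|R_n(s)|_H^{\ell}\Big)\,\dd s,
\]
where $\Xi,\Theta,\Psi_\ell$ are nonnegative processes built from $|\phi|_K$, the $|u_k|_H$ and the constants $\gamma_j$, and hence have finite moments of every order by Proposition \ref{prop:u1} and Theorem \ref{thm:uk}; moreover $\Theta(s)=\gamma_1(1+|\phi(s)|_K^{m-1})$ is bounded by a deterministic constant. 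The linear term drives a Gronwall estimate whose exponential factor is therefore a finite deterministic constant; applying Gronwall's lemma, raising to the $p$-th power, taking $\sup_{t\le T}$ and expectation, and using Hölder in the time variable together with the finite-moment bounds, produces $\EE\big[\sup_{t\le T}|R_n(t,\ep)|_H^p\big]\le C_p\,\ep^{p(n+1)}$, which in particular (for $\ep\in(0,1]$) gives the asserted bound $C_p\,\ep^{n+1}$.

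The main obstacle is the superlinear self-dependence of the integrand on $R_n$ (the terms $|R_n|_H^\ell$, $\ell\ge2$), which blocks a naive linear Gronwall. I would resolve it by induction on $n$, exploiting $R_n=R_{n-1}-\ep^n u_n$: the statement at level $n-1$ gives the a priori rate $\EE[\sup_{t\le T}|R_n|_H^q]\le C\,\ep^{nq}$ for every $q$, whence, after Hölder in $\Omega$ separating $\Psi_\ell$ from $|R_n|^{\ell}$, each superlinear term is already of order $\ep^{n\ell}$ with $n\ell\ge2n\ge n+1$ and thus lands \emph{inside} the desired $\ep^{p(n+1)}$ budget rather than needing to be absorbed into the Gronwall term; only the linear term $\nabla F(\phi)[R_n]$, with bounded coefficient, then feeds the Gronwall iteration. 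The base case $n=1$ rests on the continuity estimate $\EE[\sup_{t\le T}|u-\phi|_H^q]=\mathbf{O}(\ep^q)$, obtained from the same pathwise scheme with $G(s)=F(u(s))-F(\phi(s))$ controlled by the dissipativity in Hypothesis \ref{hp:A+F}\,(ii) and with the surviving $\ep L_A$ term estimated through Hypothesis \ref{prop:StochasticConvolution}. (Alternatively one may localize by $\tau_M=\inf\{t:|R_n(t)|_H>M\}$ and let $M\to\infty$.)
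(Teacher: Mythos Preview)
Your proposal follows the same opening as the paper: subtract the mild formulations of $u$, $\phi$, $u_1,\dots,u_n$ so that $e^{tA}u^0$ and $\ep L_A(t)$ cancel, obtain the purely deterministic integral representation of $R_n(t,\ep)$, and recognise the integrand $G(s)$ as the Taylor-expansion remainder $R_1^{(n)}(\phi(s);h(s),\ep)$ of Section~\ref{sec:2}. Where you diverge is in the closure of the estimate. The paper's proof stops at that identification and simply invokes Lemma~\ref{lm:R1} to bound $\EE\big[\sup_{t\le T}|R_1^{(n)}(\phi(t);h(t),\ep)|_H^p\big]\le C\,\ep^{p(n+1)}$, which together with the semigroup contraction gives the result in one line---no Gronwall argument and no induction appear in the paper. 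Your extra layer (splitting off the linear-in-$R_n$ piece $\nabla F(\phi)[R_n]$, Gronwall with bounded coefficient, and induction on $n$ via $R_n=R_{n-1}-\ep^n u_n$ to control the superlinear contributions) is genuinely additional, and it addresses a point the paper glosses over: Lemma~\ref{lm:R1} is stated and proved under the \emph{hypothesis} that $r^{(n)}(\cdot;\ep)=\mathbf{o}(\ep^n)$, i.e.\ that the remainder is already small, so invoking it directly with $h=u-\phi$ and $r^{(n)}=R_n$ is circular without some a~priori smallness of $R_n$. Your induction supplies exactly that a~priori input ($R_n=R_{n-1}-\ep^n u_n=\mathbf{O}(\ep^n)$ in every $L^q$), after which the appeal to Lemma~\ref{lm:R1}---or, equivalently, to your unpacked estimates---becomes legitimate. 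In short: same skeleton, but your argument is the more complete one; the paper's is shorter because it silently assumes the bootstrap your induction actually performs.
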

    \begin{proof}
     Let us define $R_n(t,\ep)$, $n \in \mathbb{N}$,  as stated in the theorem.
     Since by construction
     \begin{itemize}
      \item $\phi(t) = e^{t A} u^0 + \int_0^t e^{(t-s)A} F(\phi(s)) ds$ (cf. Definition \ref{def:det});
      \item $u(t) = e^{t A} u^0 + \int_0^t e^{(t-s)A} F(u(s)) ds + \ep L_A(t)$ (cf. Definition \ref{def:stoc-conv});
      \item $u_1(t) = \int_0^t e^{(t-s)A} \nabla F(\phi(s))[u_1(s)] ds + L_A(t)$ (cf. Proposition
     \ref{prop:u1} and Definition \ref{def:stoc-conv});
      \item $u_k(t) = \int_0^t e^{(t-s)A} \nabla F (\phi(s)) [u_k(s)] ds  + \int_0^t e^{(t-s)A} \Phi_k(s) ds$  for $k=2,\ldots,n$, with $\Phi_k(s) := \Phi_k( \phi(s))\left[u_1(s),\ldots, u_{k-1}(s)\right] $ defined in \eqref{eq:Phi_k}
    (cf. Theorem \ref{thm:uk} and Definition \ref{def:stoc-conv});
     \end{itemize}
     we have
     $$
      R_n(t,\ep) = \int_0^t e^{(t-s)A}
      \left( F(u(s))-F(\phi(s))-\sum_{k=1}^n \ep^k \nabla F (\phi(s))[u_k(s)] - \sum_{k=2}^n \ep^k \Phi_k(s) \right) ds \:.
     $$
     Recalling that $R_1^{(n)}(\phi(s);h(s),\ep) = F(u(s))-F(\phi(s))-\sum_{k=1}^n \ep^k \nabla F (\phi(s))[u_k(s)] - \sum_{k=2}^n \ep^k \Phi_k(s)$ we get:
     \begin{equation}
      \begin{aligned}
      \EE\left[ \sup_{t \in [0,T]} \left| R_n(t,\ep) \right|_H^p\right] \leq
       & \EE  \left[\sup_{t \in [0,T]}
       \left| \int_0^t e^{(t-s)A} R_1^{(n)}(\phi(s);h(s),\ep) ds \right|_H^p \right] \\
       \leq
       & \: \EE \left[ \sup_{t\in[0,T]} \int_0^t \| e^{(t-s)A} \|_{L(H)}^p
       |R_1^{(n)}(\phi(s);h(s),\ep)|_H^p ds \right]\\
       \leq
       & \: \EE\left[ \sup_{t\in[0,T]}
        |R_1^{(n)}(\phi(t);h(t),\ep)|_H^p
        \int_0^t e^{-\omega (t-s) p} ds \right]\\
        \leq
        & \: C_{n,p}\ep^{p(n+1)}, %\EE  \sup_{t\in[0,T]} |R_1^{(n)}(\phi(t);h(t),\ep)|_H^p \:,
       \end{aligned}
     \end{equation}
     for some positive constant $C_{n,p}$ (depending on $n,p$, but not on $\ep$), where in the second and third inequality we have used the contraction property of the semigroup generated by $A$. Now  recalling  Lemma \ref{lm:R1} the inequality in Theorem \ref{Th:espansione} follows.
    \end{proof}

   \begin{example} \label{Remark:FHN} \rm

   Our results apply in particular to stochastic PDEs describing the FitzHugh-Nagumo equation with a L\'evy noise perturbation
   (related to those studied with a Gaussian noise, for example, in \cite{Tu1, Tu2, Tu92} and \cite{BoMa}).

   The reference equation is given by (see \cite[equation (1.1)]{BoMa})
   \begin{equation}\label{eq:bm08}
   \begin{cases}
      &\partial_t
     v(t,x)=\partial_x (c(x)\partial_x v(t,x))
       -p(x)v(t,x)-w(t,x)+f(v(t,x))+\ep\dot{L}_1(t,x),\\
       & \partial_t w(t,x)=\gamma v(t,x)-\alpha w(t,x)+\ep \dot{L_2}(t,x), \\
       &\partial_x v(t,0)=\partial_x v(t,1)=0,\\
       %&\partial_x w(t,0)=\partial_x w(t,1)=0\\
       &v(0,x)=v_0(x), \quad w(0,x)=w_0(x),
   \end{cases}
   \end{equation} with the parameter $\ep>0$ in front of the noise, where $u,w$ are real valued random variables, $\alpha, \gamma$ are strictly positive
   real phenomenological constants and $c,p$ are strictly positive smooth functions on $[0,1]$. Moreover, the initial values $v_0, w_0$ are in $C([0,1])$.
    The nonlinear term is of the form $f(v) = -v (v-1) (v- \xi)$, where $\xi \in (0,1)$. Finally $L_1, L_2$ are independent $Q_i$-L\'evy processes with values in
    $L^2(0,1)$, with $Q_i$ positive trace class commuting operators, commuting also with $A_0$, $A_0$ being defined below.
   The above equation can be rewritten in the form of an infinite dimensional
   stochastic evolution equation on the space
 \begin{equation}\label{eq:SettingBoMa}
    H: = L^2 (0,1) \times L^2(0,1)
    \end{equation}
    by introducing the following operators:
    \begin{align*}
       &A_0:=\partial_x c(x)\partial_x, \\
       &D(A_0):=\left\{u \in H^2(0,1); v_x(0)=v_x(1)\right\},\,\, acting\,\, in\,\,\, L^2(0,\,1)\\
    \intertext{and}
     &A =
\begin{pmatrix}
A_0- p  & -I  \\
\gamma I  & \alpha I   \\
\end{pmatrix},
\end{align*}
with domain $D(A):= D(A_0)\times L^2(0,1)$, and
$$
F \binom{v}{w}  =
 \begin{pmatrix} -v (v-1)  (v-\xi)\\ 0 \end{pmatrix} \:,\: \text{with } D(F) := L^6 (0,1) \times L^2(0,1).
$$
Further, we introduce the Banach space $K:=L^{18}(0,1)\times L^2(0,1)$, endowed with the norm
$|\cdot|_K:=|\cdot|_6+|\cdot|_2$ and consider $u^0\in K$.
In this way, the equation \eqref{eq:bm08} can be rewritten as
\begin{align*}
\begin{cases}
   d u(t)= Au(t)+ F(u(t))d t + \sqrt{Q}d L(t)\\
   u(0)=u^0:=(v^0,w^0) \in K
   \end{cases}
   \:,
\end{align*}
with $A$ and $F$ satisfying Hypothesis \ref{hp:A+F} when
 $\xi^2-\xi+1 \leq 3\min_{x\in[0,1]}p(x)$. In fact, the properties of the two operators $A$ and $F$  can be
determined starting from the problems considered in \cite{BoMa} and
\cite{Cerrai99}. In particular from \cite[Section 2.2]{Cerrai99} the
estimates on the nonlinear term $F$ and its derivatives can easily
be deduced. Moreover we claim that the stochastic convolution
\begin{align*}
      L_A(t):= \int_0^t e^{(t-s)A} {\rm d} L(s),
\end{align*}
(where $e^{tA}, t\geq 0$ denotes the semigroup generated by $A$) is
well-defined and admits a continuous version with values into the
space $K$. This fact can be proved by an application of \cite{ PeZa}
and its proof, taking into account that the domain of fractional
powers of $A$ are contained in $K$ (cf. \ Appendix A - in particular
Example A.5.2 - in \cite{DPZRosso}) and moreover we are assuming
${\rm Tr}\,Q<\infty$.

%As a matter of fact in \cite{BoMa} the operator $Q$ in \eqref{eq:eps} is replaced by an operator $ \begin{pmatrix}
% Q_1 & 0  \\
%$0 & Q_2  \\
%\end{pmatrix}   $ with $Q_i$ nuclear operators. However the method of showing existence and uniqueness of mild solutions used in \cite{BoMa} based on \cite[Theorem 5.3.1]{DPZVerde} can be adapted to our situation by exploiting the fact that the linear part $A$ in our case satisfies the stronger assumption (1) in Hyp. \eqref{hp:A+F} which is satisfied in the model described by \eqref{eq:det} due to the fact that the generator $A$ described above
%in (\ref{eq:SettingBoMa}) has discrete spectrum and the Weyl's type estimates for the eigenvalues $\lambda_k$ of $A$, $\lambda_k \sim k^2$ for $k \rightarrow \infty$, hold, A being of the form of a second order strictly elliptic operator with small coefficients on a compact interval, see e.g. \cite{RS4}.

Then by Theorem \ref{Th:espansione} we get an asymptotic expansion in powers of $\ep>0$ of the solution, in terms of solutions of the corresponding
deterministic FitzHugh-Nagumo equation and the solution of a system of (explicit) linear (non homogeneous)  stochastic equations.
The expansion holds for all orders in $\ep>0$.
The remainders are estimated according to Theorem \ref{Th:espansione}.
%We can use these results to carry through a discussion similar to the one made by Tuckwell
These results should allow to obtain rigously results similarly to
those obtained numerically up to second order in $\epsilon$ in
\cite{TuEsp,Tu92} in which the noise was of Gaussian type. Tuckwell,
in particular, has made heuristic expansions up to second order in
$\varepsilon$ for the mean and the variance of the solution process
$u=(u(t))_{t\geq 0}$ (see \cite{TuEsp,Tu92}), proving in particular
that one has enhancement
%This gives in particular a rigorous justification of expansions studied by Tuckwell  up to second order in $\ep$ for the mean and the variance of the solution process \cite{TuEsp}. In particular  it follows from \cite{TuEsp} that one has enhancement
(respectively reduction) of the mean according to whether the expansion is around which stable point of the stationary deterministic equation.\newline

\noindent
%In a future work \cite{ADPM} we shall apply these results to the case of networks of FitzHugh-Nagumo neurons.
%Moreover in the second part of the present work we shall study asymptotic expansions for the case where the dissipativity condition is replaced by other conditions on the non Lipschitz drift term.
   \end{example}

\section*{Acknowledgments}
\noindent This paper was greatly influenced by the research project
NEST at the University of Trento. We thank Stefano Bonaccorsi and
Luciano Tubaro and especially Luca di
Persio for many stimulating discussions.\\
The authors would like to gratefully acknowledge the great
hospitality of various institutions. In particular for the first
author CIRM and the Mathematics Department of the University of
Trento; for him and the third author  King Fahd University of
Petroleum and Minerals at Dhahran; for the second and third author
IAM and HCM at the University of Bonn, Germany.

\medskip

\begin{flushleft}
\footnotesize
{\it S. Albeverio}  \\
 Dept. Appl. Mathematics, University of Bonn,\\
HCM; SFB611, BiBoS, IZKS\\
\medskip
{\it E. Mastrogiacomo}\\
Politecnico di Milano, Dipartimento di Matematica \\
   F. Brioschi, Piazza Leonardo da Vinci 32, 20133 Milano \\
\medskip
{\it B. Smii} \\
Dept. Mathematics, King Fahd University of Petroleum and Minerals, \\
Dhahran 31261, Saudi Arabia\\

\medskip
 E-mail: albeverio@uni-bonn.de \\
\hspace{1,1cm} elisa.mastrogiacomo@polimi.it\\
\hspace{1,1cm}boubaker@kfupm.edu.sa

 \end{flushleft}
\end{document}